\numberwithin{equation}{section}
\newtheorem{defi}{Definition}[section]
\newtheorem{thm}[defi]{Theorem}
\newcommand{\bpr}{\begin{proof}[Proof]}  
\newcommand{\epr}{\end{proof}}
\newcommand{\beq}{\begin{equation}}
\newcommand{\eeq}{\end{equation}}
\newcommand{\bce}{\begin{center}}
\newcommand{\ece}{\end{center}}
\newcommand{\be}{\begin{enumerate}}  
\newcommand{\ee}{\end{enumerate}}
\newcommand{\bc}{\begin{center}}
\newcommand{\ec}{\end{center}}
\renewcommand{\rho}{\varrho}
\def\t{\tau}
\def\RR{\mathbb R}
\def\CC{\mathbb C}
\def\N{\mathbb N}
\def\cA{\mathcal A}
\def\cB{\mathcal B}
\def\cD{\mathcal D}
\def\cG{\mathcal G}
\def\cE{\mathcal E}
\def\cF{\mathcal F}
\def\cSM{\mathcal{SM}}
\def\cSX{\mathcal{SX}}
\def\bea{\begin{eqnarray}}
\def\eea{\end{eqnarray}}
\def\beas{\begin{eqnarray*}}
\def\eeas{\end{eqnarray*}}
\def\l{{\sf k}}
\def\nn{\nonumber}
\begin{document}

\title[Rayleigh-Taylor Instability for the Verigin Problem]
{ The Rayleigh-Taylor Instability for the Verigin Problem with and without Phase Transition}

\author{Jan Pr\"uss}
\address{Martin-Luther-Universit\"at Halle-Witten\-berg\\
         Institut f\"ur Mathematik \\
         D-06120 Halle, Germany}
\email{jan.pruess@mathematik.uni-halle.de}

\author{Gieri Simonett}
\address{Department of Mathematics\\
        Vanderbilt University\\
        Nashville, Tennessee\\
        USA}
\email{gieri.simonett@vanderbilt.edu}

\author{Mathias Wilke}
\address{Martin-Luther-Universit\"at Halle-Witten\-berg\\
         Institut f\"ur Mathematik \\
         D-06120 Halle, Germany}

\email{mathias.wilke@mathematik.uni-halle.de}

\thanks{This work was supported by a grant from the Simons Foundation (\#426729, Gieri Simonett).}

\subjclass[2010]
{35Q35, 
76D27,  
76E17,  
35R37,  
35K59}  
\keywords{Two-phase flows, phase transition, Darcy's law with gravity, available energy, quasilinear parabolic evolution equations, maximal regularity, generalized principle of linearized stability}

\begin{abstract}
Isothermal compressible two-phase flows in a capillary  are modeled with and without phase transition in the presence of gravity, employing Darcy's law for the velocity field. It is shown that the resulting systems are thermodynamically consistent in the sense that the available energy is a strict Lyapunov functional. In both cases, the equilibria with flat interface are identified.  It is shown that the problems are well-posed in an $L_p$-setting and generate local semiflows in the proper state manifolds. The main result concerns the stability of equilibria with flat interface, i.e. the Rayleigh-Taylor instability.
\end{abstract}

\maketitle

\section{Introduction}
{
The Verigin problem has been proposed in order to describe the process of pumping a viscous liquid into a porous medium 
which contains another fluid. This situation  occurs,  for instance,  when a porous medium containing oil is being flooded by water 
in order to displace the oil.
The resulting model is the compressible analogue to the Muskat problem in which the phases are considered incompressible. 

While there is an extensive amount of mathematical research on the Muskat problem, see for instance the extensive list of
references in \cite{EMW15, Ma17,PrSi16a}, there is only scarce work on the Verigin problem.
The papers \cite{BiSo00, Fro99, Fro03, Rad04, Tao97, TaYi96, Xu97} address local existence in some special cases, 
mostly excluding surface tension.
In \cite{PrSi16b},  the authors have developed a dynamical theory for the Verigin problem with and without phase transition. 
This includes local well-posedness, thermodynamical consistency,  identification of the equilibria, discussion of their stability, the local 
semiflows on the proper state manifolds, as well as convergence to equilibrium of solutions which do not develop singularities. 

In this manuscript, we consider the case where one fluid lies above the other one and gravity is acting on the fluids.
It is well-known  
that  the case where a fluid of higher density overlies a lighter one can lead to an instability, the 
famous Rayleigh-Taylor instability, see for instance \cite{Zh17a, Zh17b}.
This effect has been studied in the incompressible case for the Muskat problem with surface tension,
see for instance \cite{EEM13,EsMa11,EMM12a} and the references listed therein,
but also for the full (compressible or incompressible) Navier-Stokes equations \cite{GuTi10, JTW16, PrSi10, WaTi12, Wi13}.

It is the aim of this paper to study the Rayleigh-Taylor instability for the {\em Verigin problem}. }
We consider the problem in the setting of a capillary. To be more precise, let $G\subset \RR^{d-1}$, $d\ge 2$ be a bounded domain with 
$C^{4}$ (\"uberall ersetzt) -boundary $\partial G$, let $\overline{h},\underline{h}>0$, and define the finite capillary by means of $\Omega\times(-\underline{h},\overline{h})$. This set is decomposed into two parts, {\em the phases}, according to 
$$\Omega_1 =\{(x,y):\,x\in G,\,y\in (-\underline{h},h(x))\},\quad 
\Omega_2=\{(x,y):\,x\in G,\,y\in (h(x),\overline{h})\}.$$ 
Hence the interface is the graph $\Gamma =\{(x,h(x)):\,x\in G\}$, a free boundary. 
\begin{center}
\includegraphics[width=4.5cm]{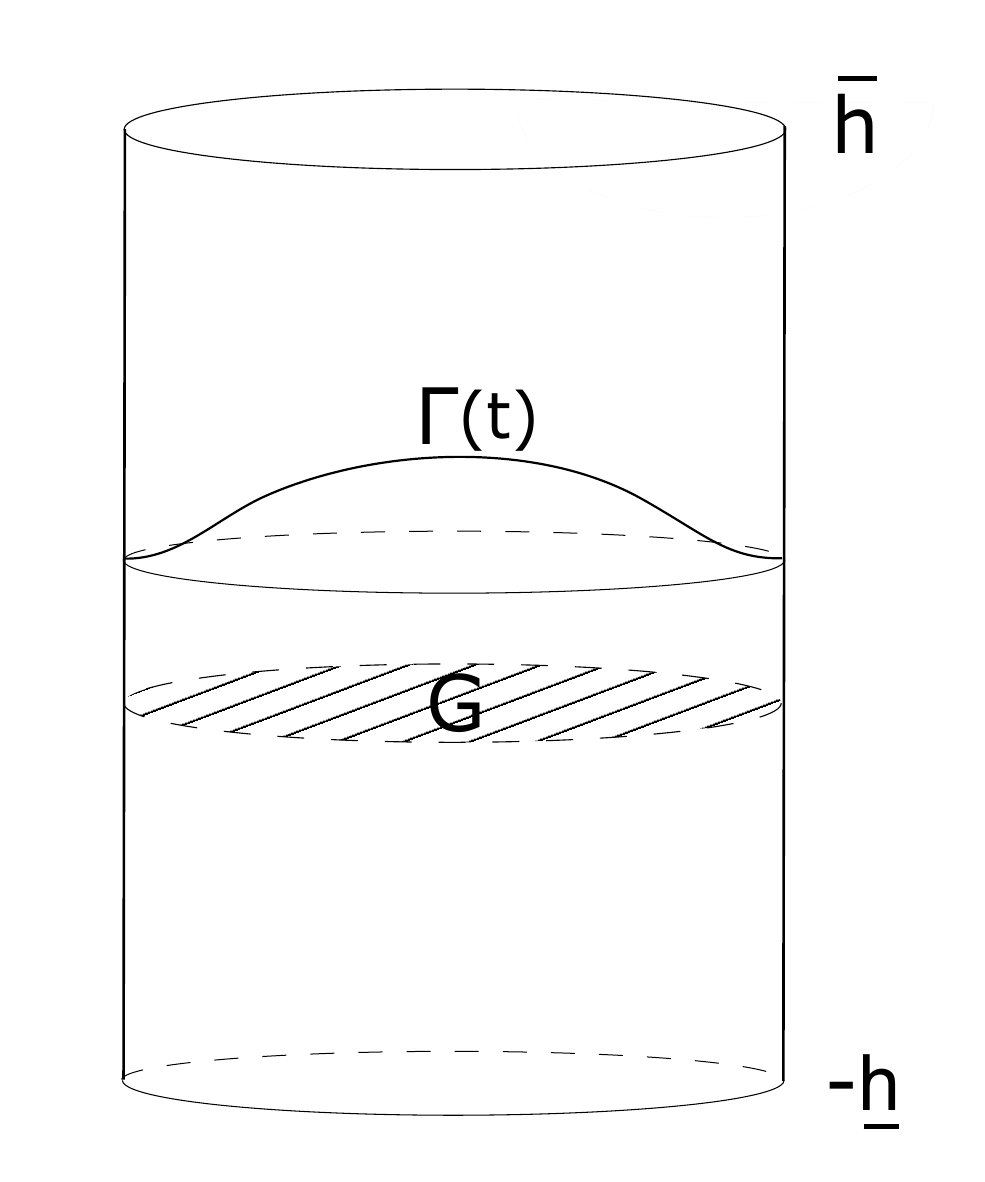}
\captionof{figure}{The capillary}
\end{center}
Let $u$ denote the velocity, $\varrho>0$ the density, and $p$ the pressure fields in $\Omega$. By $\nu =\nu_\Omega$ we designate the outer normal of $\Omega$, by $\nu_\Gamma$ the normal of $\Gamma$ pointing upwards, and by $\nu_G$ the outer normal of $G$, which is identified with its trivial extension to $\RR^d$. The jump of a quantity $w$ across $\Gamma$ is indicated by $[\![w]\!]=w_2-w_1$, and the unit vectors in $\RR^d$ are named ${\sf e}_j$, $j=1,\ldots,d$.

To state the model, recall that conservation of mass reads
\begin{equation}\label{mass}
\begin{aligned}
\partial_t\varrho + {\rm div}( \varrho u)&=0 &&\mbox{in } \Omega\setminus\Gamma,\\
\mbox{}[\![\varrho(u\cdot\nu_\Gamma-V_\Gamma)]\!] &=0  && \mbox{on } \Gamma,
\end{aligned}
\end{equation}
where $V_\Gamma$ denotes the normal velocity of the interface $\Gamma$ in the direction of $\nu_\Gamma$.
On the part $\partial\Omega\setminus\partial\Gamma$ of the outer boundary we require $u\cdot\nu_\Omega=0$, so that the flow induced by the velocity field $u$ does not leave $\Omega$.
The jump condition in \eqref{mass} shows that the {\em phase flux} 
$j_\Gamma:=\varrho(u\cdot\nu_\Gamma-V_\Gamma)$
is uniquely defined on $\Gamma$. We may then rewrite the jump condition as follows.
$$ V_\Gamma =[\![\varrho u\cdot\nu_\Gamma]\!]/[\![\varrho]\!],\quad j_\Gamma = [\![u\cdot\nu_\Gamma]\!]/[\![1/\varrho]\!].$$
On the interface we employ the {\em Laplace-Young law} and the $90$-degree angle condition
\begin{equation}\label{LYlaw}
\begin{aligned}
\mbox{}[\![p]\!]-\sigma H_\Gamma &=0  && \mbox{on } \Gamma,\\
\nu_G\cdot\nu_\Gamma &=0   && \mbox{on } \partial\Gamma,
\end{aligned}
\end{equation}
where $\sigma>0$ denotes the constant coefficient of surface tension and $H_\Gamma$ the $(d-1)$-fold mean curvature of $\Gamma$.

The total available energy of problem \eqref{mass} is given by
\begin{equation}
\label{available-energy}
 {\sf E}_a = \int_\Omega [\varrho\psi(\varrho) + \gamma \varrho y]\, d(x,y) + \sigma\! \int_\Gamma d\Gamma,
\end{equation}
i.e.\ the sum of the total free, potential, and surface energy. Note that in our context there is no kinetic energy.
Here $\psi$ means the mass-specific free energy density, which also depends on the phases,
and $\gamma$ is the acceleration of gravity.
In the next section we show
$$ \frac{d}{dt}{\sf E}_a = \int_\Omega u\cdot(\nabla p + \gamma \varrho\, {\sf e}_d)\,d(x,y) 
+\int_\Gamma[([\![p]\!]-\sigma H_\Gamma) V_\Gamma + [\![\varphi]\!]j_\Gamma ]\,d\Gamma,$$
where we already used {\em Maxwell's law} 
\begin{equation}
\label{Maxwell}
p(\varrho)=\varrho^2\psi^\prime(\varrho)
\end{equation}
and $\varphi(\varrho):= \psi(\varrho)+\varrho\psi^\prime(\varrho)$ as  abbreviations; observe the relation $p^\prime(\varrho)= \varrho\varphi^\prime(\varrho)$.

To obtain a closed model, we use Maxwell's law for the pressure, {\em Darcy's law} with gravity for the velocity, and a constitutive law for the phase flux $j_\Gamma$.
We ought to distinguish two cases.\vspace{3mm}\\
{\bf (i) No phase transition.}
Here we impose
\begin{align}\label{NoPT}
u=-k(\nabla p + \gamma \varrho\,{\sf e}_d),\quad p=\varrho^2 \psi^\prime(\varrho),\quad  j_\Gamma=0.
\end{align}
The constant $k>0$ is called {\em permeability} of the fluid; it may depend on the phase. 
Vanishing phase flux $j_\Gamma=0$ is equivalent to the relations
$$ [\![u\cdot\nu_\Gamma]\!]=0,\quad V_\Gamma=u\cdot\nu_\Gamma,$$
{hence the interface $\Gamma$ is solely advected with the fluid flow.}

\noindent
{\bf (ii) With phase transition.}
In this case, the constitutive laws read
\begin{align}\label{PT}
u=-k(\nabla p + \gamma \varrho \,{\sf e}_d),\quad p=\varrho^2 \psi^\prime(\varrho),\quad  [\![\varphi (\varrho)]\!]=0.
\end{align}
Note that the normal velocity $u\cdot\nu_\Gamma$ may jump across the interface $\Gamma$. The evolution of the interface $\Gamma$ is determined by the equation
$$[\![\rho]\!]V_\Gamma=[\![\rho u\cdot\nu_\Gamma]\!].$$
Here we assume $[\![\rho]\!]\neq 0$.

For the energy dissipation, these constitutive laws imply
$$ \frac{d}{dt}{\sf E}_a = - \frac{1}{k}\int_\Omega |u|^2\, d(x,y) = -k \int_\Omega|\nabla p +\gamma\varrho \,{\sf e}_d|^2\, d(x,y)\leq0.$$
Hence the total available energy is a Lyapunov functional for the problem. Even more, dissipation vanishes, i.e.\ $d/{\sf E}_a dt=0$, if and only if
\begin{equation}
\label{equilibria}
\begin{aligned}
\nabla p + \gamma\varrho(p){\sf e}_d &=0 &&\mbox{in }\; \Omega\setminus\Gamma,\\
[\![p]\!] -\sigma H_\Gamma &=0 &&\mbox{on }\; \Gamma,\\
\nu_G \cdot\nu_\Gamma &=0 && \mbox{on }\; \partial\Omega\cap\partial\Gamma,
\end{aligned}
\end{equation}
in case {\bf (i)}, and in addition $[\![\varphi]\!]=0$ on $\Gamma$ in case {\bf (ii)}.
This means that $(p,\Gamma)$ is an equilibrium, thereby showing that ${\sf E}_a$ is a strict Lyapunov functional. 
In particular, the problems are thermodynamically consistent.

Here we consider $\psi\in C^3(0,\infty)$ as given, assuming $\psi^\prime(s)>0$ as well as $\varphi^\prime(s)>0$ for all $s>0$. 
Then $p$, defined by Maxwell's law, is positive and strictly increasing, as $p^\prime(s)= s\varphi^\prime(s)>0$ by assumption. 
Therefore, we may invert Maxwell's law to obtain the {\em equation of state}
$\varrho = \varrho(p)$.

We will be interested in the stability properties of equilibria $(p_*,\Gamma_*)$ such that $\Gamma_*=\{(x,h_*):x\in G\}$ with a constant $h_*\in (\underline{h},\overline{h})$. Such equilibria will be called {\em flat}.
Then any interface $\Gamma(t)$ which is $C^2$-close to $\Gamma_*$ can be represented as a graph $\Gamma(t)=\{(x,h(t,x)):x\in G\}$, {see e.g.\ \cite{PrSi16}.} For such interfaces we have
$$ H_\Gamma = {\rm div}_x(\beta(h)\nabla_x h),\quad \beta(h)=(1+|\nabla_xh|^2)^{-1/2},$$
and
$$ V_\Gamma = \beta(h) \partial_t h,\quad \nu_\Gamma = \beta(h)\left[\begin{array}{c} -\nabla_x h\\ 1\end{array}\right].$$
Then \eqref{LYlaw} reads
\begin{equation}\label{LYlawh}
\begin{aligned}
{ }[\![p]\!] -\sigma {\rm div}_x(\beta(h)\nabla_x h)&=0 &&\text{on}\;\; \Gamma,\\
\partial_{\nu_G} h &=0 && \text{on}\;\; \partial G. 
\end{aligned}
\end{equation}
The jump condition in \eqref{mass} becomes in case {\bf (i)}
\begin{equation}
\begin{aligned} \label{dynhi}
{ }[\![k( \partial_y p -\nabla_x h\cdot\nabla_x p +\gamma\varrho(p))]\!]&=0 &&\text{on }\; \Gamma,\\
\partial_t h + k( \partial_y p -\nabla_x h\cdot\nabla_x p +\gamma\varrho(p))&=0  && \text{on }\;\Gamma,
\end{aligned}
\end{equation}
and in case {\bf (ii)} it reads
\begin{equation} \label{dynhii}
\begin{aligned}
{ }[\![\varphi(\varrho)]\!]&=0  && \text{on }\; \Gamma,\\
[\![\varrho(p)]\!]\partial_t h + [\![\varrho(p) k( \partial_y p -\nabla_x h\cdot\nabla_x p +\gamma\varrho(p))]\!]&=0  && \text{on }\; \Gamma.\\
\end{aligned}
\end{equation}
We shall show in Section 3 that problems \eqref{mass}, \eqref{LYlawh}, \eqref{dynhi} as well as \eqref{mass}, \eqref{LYlawh}, \eqref{dynhii}
are well-posed in an $L_p$-setting and generate local semiflows in their proper state manifolds $\cSM_i$ resp.\ $\cSM_{ii}$ 
to be be defined {at the end of} Section 3.

Our main results in this paper concern stability of flat equilibria. 
We prove in Section 4 that a flat equilibrium $(p_*,\Gamma_*)$ is stable in its state manifold in case {\bf (i)} if  
$\sigma\mu_1> \gamma[\![\varrho(p_*)]\!],$
and unstable if this inequality is reversed. 
 Here $\mu_1>0$ denotes the first nontrivial eigenvalue of the negative Neumann-Laplacian $-\Delta_N$ {in $L_2(G)$, $G\subset\RR^{d-1}$.} 
 If phase transition is present, i.e.\ in case {\bf(ii)}, we have stability if 
 $$\sigma\mu_1> \gamma[\![\varrho(p_*)]\!], \;\; \text{and in addition} \;\;
[\![\varrho(p_*)]\!](\varrho_1(p_*(\overline{h}))-\varrho_2(p_*(-\underline{h})))>0 ,$$ 
and instability if at least one of the inequalities is reversed.
 This is the Rayleigh-Taylor instability for the Verigin problem in a capillary with and without phase transition.
{Observe that if $[\![\rho(p_*)]\!]<0$, then one always has stability in case {\bf (i)} and also in case {\bf (ii)}, if in addition
$$\rho_1(p_*(\overline{h}))-\rho_2(p_*(\underline{h}))<0.$$}
{Setting $\sigma_*:=\gamma [\![ \varrho(p_*)]\!]/\mu_1$ we observe that 
$\sigma>\sigma_*$ yields stability (if additionally $[\![\varrho(p_*)]\!](\varrho_1(p_*(\overline{h}))-\varrho_2(p_*(-\underline{h})))>0$
in case (ii)). Hence, large surface tension has a stabilizing effect for the Verigin problem in a capillary.}

These results are based on a precise analysis of the spectra of the full linearizations of the problem at the given flat equilibrium $(p_*,\Gamma_*)$, and on the generalized principle of linearized stability \cite{PSZ09}. The proof follows the ideas in the monograph Pr\"uss and Simonett \cite{PrSi16}, Chapters 10 and 11, and in the papers Pr\"uss and Simonett \cite{PrSi10, PrSi16b} and in Wilke \cite{Wi13}.

\subsection{Notations} {Here and in the sequel, $W^s_p$ denote the Sobolev-Slobodeckii and $H^s_p$  the Bessel potential spaces,
respectively. We recall that $W^k_p=H^k_p$ for $k\in\N$ and $p\in (1,\infty)$. Let $X$ be an arbitrary Banach space.  For $T>0$ and $1<p<\infty$, let $L_{p,\mu}(0,T;X)$ denote the vector-valued weighted $L_p$-space
$$L_{p,\mu}(J;X) := \{u : (0,T)\to X:t\mapsto t^{1-\mu} u(t)\in L_p(0,T;X)\}$$
where $\mu\in (1/p, 1]$. The corresponding Sobolev space $H^1_{p,\mu}(0,T;X)$ is being defined as
$$H^1_{p,\mu}(0,T;X) := \{u\in L_{p,\mu}(J;X)\cap W_{1,loc}^1 (0,T;X) : \dot{u}\in L_{p,\mu}(J;X)\}$$
and a similar definition is used for higher order weighted Sobolev spaces. The weighted versions $W_{p,\mu}^s(0,T;X)$ and $H_{p,\mu}^s(0,T;X)$ of the Sobolev-Slobodeckii and the Bessel potential spaces are defined via real and complex interpolation of weighted Sobolev spaces, respectively. 

Finally, we define 
$${_0}W_{p,\mu}^s(0,T;X):=\{u\in W_{p,\mu}^s(0,T;X):u(0)=0\},$$
whenever the time trace $u(0)=\operatorname{tr}_{t=0} u$ exists.

Let $X,Y$ be Banach spaces. By $\cB(X,Y)$ we denote the set of all bounded and linear operators as mappings from $X$ to $Y$. The open ball in $X$ with center $x_0\in X$ and radius $r>0$ is denoted by $B_X(x_0,r)$.

}

\section{Available Energy and Equilibria}
\noindent
{\bf (a) Energy Dissipation.}
We consider the total available energy defined before in Section 1. 
Along a sufficiently smooth solution, the standard surface transport theorem yields
$ \frac{d}{dt}\int_\Gamma d\Gamma = -\int_\Gamma H_\Gamma V_\Gamma\, d\Gamma.$
On the other hand, the standard transport equation implies with conservation of mass \eqref{mass}
\begin{align*}
\frac{d}{dt}\int_\Omega \varrho y\, d(x,y) &= \int_\Omega \partial_t\varrho y\, d(x,y) - \int_\Gamma [\![\varrho]\!]V_\Gamma h\,d\Gamma\\
&= -\int_\Omega y\, {\rm div}(\varrho u)\, d(x,y) - \int_\Gamma [\![\varrho]\!] h\, d\Gamma\\
&= \int_\Omega \varrho u\cdot {\sf e}_d\, d(x,y) +  
\int_\Gamma \Big([\![\varrho u\cdot\nu_\Gamma]\!]-[\![\varrho V_\Gamma]\!]\Big)h\, d\Gamma\\
&=\int_\Omega \varrho u\cdot {\sf e}_d\,d(x,y).
\end{align*}
In a similar way, employing also Maxwell's law, we obtain
\begin{align*}
\frac{d}{dt}&\int_\Omega \varrho\psi(\varrho)\, d(x,y)\\ 
&= \int_\Omega (\varrho\psi(\varrho))^\prime \partial_t \varrho \, d(x,y) 
- \int_\Gamma[\![\varrho\psi(\varrho)]\!] V_\Gamma\, d\Gamma\\
&= -\int_\Omega \Big( (\varrho\psi(\varrho))^\prime \varrho\, {\rm div}\, u + u\cdot \nabla(\varrho\psi(\varrho)) \Big)\,d(x,y) -
\int_\Gamma[\![\varrho\psi(\varrho)V_\Gamma]\!]\, d\Gamma\\
&= -\int_\Omega\Big( (\varrho\psi(\varrho))^\prime \varrho - \varrho\psi(\varrho)\Big){\rm div}\, u\, d(x,y) 
+\int_\Gamma[\![\varrho(u\cdot\nu_\Gamma-V_\Gamma)\psi(\varrho]\!]\,d\Gamma\\
&= -\int_\Omega p(\varrho)\,{\rm div}\, u\, d(x,y) + \int_\Gamma [\![\psi(\varrho)]\!]j_\Gamma\, d\Gamma\\
&= \int_\Omega u\cdot\nabla p(\varrho)\,d(x,y) 
	+\int_\Gamma \Big([\![p(\varrho)]\!] V_\Gamma+[\![\psi(\varrho)+p(\varrho)/\varrho]\!]j_\Gamma \Big)\,d\Gamma.
\end{align*}
In summary, with the definition of $\varphi$, these identities yield
$$ \frac{d}{dt}{\sf E}_a = \int_\Omega u\cdot(\nabla p(\varrho) + \gamma \varrho\, {\sf e}_d)\,d(x,y) 
+\int_\Gamma[([\![p(\varrho)]\!]-\sigma H_\Gamma) V_\Gamma + [\![\varphi(\varrho)]\!]j_\Gamma ]\,d\Gamma.$$
Hence by the constitutive laws we get
$$ \frac{d}{dt}{\sf E}_a = - \frac{1}{k}\int_\Omega |u|^2\,d(x,y) = -k \int_\Omega|\nabla p +\gamma\varrho {\sf e}_d|^2\,d(x,y)\leq0,$$
as asserted in Section 1.

\bigskip

\noindent
{\bf (b) Equilibria.}
Next we discuss the set $\cE$ of flat equilibria of the problems, where we again have to distinguish two cases.\vspace{2mm}\\
{\bf (i)} {\em No phase transition.}
In this case we employ the equation of state $\varrho=\varrho(p)$ and define another function $\phi(p)$ by means of $\phi^\prime(p) =1/\varrho(p)$
and, say, $\phi(1)=0$. Then we obtain
$$\nabla \phi(p) = \phi^\prime(p)\nabla p = -\gamma {\sf e_d},$$
which yields
\begin{equation}
\label{eqp}
p(y) = \left\{
\begin{aligned} 
&\phi_1^{-1}(a_1 -\gamma y), && y\in [-\underline{h},h],\\ 
&\phi^{-1}_2(a_2-\gamma y),  && y\in[h,\overline{h}],
\end{aligned}
\right.
\end{equation}
where $h\in (-\underline{h},\overline{h})$ is fixed (defining the flat interface),
and $a_j$ are arbitrary constants.
The function $p$ is well well-defined, provided
$$ a_1-\gamma[-\underline{h},h] \subset im(\phi_1)\quad\mbox{and}\quad a_2-\gamma[h,\overline{h}] \subset im(\phi_2).$$
The Laplace-Young law for the flat interface
becomes $[\![p]\!]=0$, which yields the equation
$$ f(a_1,a_2,h):= \phi^{-1}_2(a_2-\gamma h)-  \phi_1^{-1}(a_1 -\gamma h)=0.$$
This equation may or may not have solutions $(a_1,a_2,h)\in \RR^2\times(-\underline{h},\overline{h})$. 
Any solution defines a flat equilibrium if $p$ is well-defined. 
With $[\phi^{-1}_j]^\prime = 1/\phi^\prime_j\circ \phi_j^{-1} = \varrho_j\phi_j^{-1}$ we have
$$ f^\prime(a_1,a_2,h)=( -\varrho_1,\varrho_2, -\gamma[\![\varrho]\!])\neq 0,$$
where $\varrho_j=\varrho_j(p(h))$ and  $[\![\varrho]\!]=\varrho_2(p(h))-\varrho_1(p(h)).$
Hence, the set of zeros of $f$ is a manifold of dimension 2 of class $C^3$ if it is nonempty, and so the set of flat equilibria is a manifold of dimension 2.

Next let us exploit conservation of the masses of the phases, i.e.
$$ {\sf M}_j(\varrho_j,\Gamma):=\int_{\Omega_j} (\varrho_j\circ p) \, d(x,y) = \int_{\Omega_{0j}} \varrho_j(p_0)\,d(x,y):={\sf M_{0j}},
\quad j=1,2.$$
Defining functions
\begin{equation*}
\begin{aligned}
& M_1(a_1,a_2,h):= |G| \int_{-\underline{h}}^h \varrho_1(\phi_1^{-1}(a_1-\gamma y))\, dy, \\
& M_2(a_1,a_2,h):= |G| \int_h^{\overline{h}}\varrho_2(\phi_2^{-1}(a_2-\gamma y))\, dy,
\end{aligned}
\end{equation*}
and setting $g(a_1,a_2,h) = [M_1-{\sf M}_{01},M_2-{\sf M}_{02},f]^{\sf T}$, the zeros of $g$ correspond to the flat equilibria with prescribed masses of the phases.
For the derivative of $g$ we obtain
$$g^\prime(a_1,a_2,h) = \left[ \begin{array}{ccc} |G|c_1&0& |G|\varrho_1\\ 0& |G|c_2& -|G|\varrho_2\\ -\varrho_1&\varrho_2& -\gamma[\![\varrho]\!]\end{array}\right],$$
where $\varrho_j = \varrho_j(p(h))$ and  
\begin{equation*}
c_1 =  \int_{-\underline{h}}^h  (\varrho_1^\prime\circ p)(\varrho_1\circ p) \, dy, \quad
c_2=\int_h^{\overline{h}} (\varrho_2^\prime\circ p)(\varrho_2\circ p) \,dy.
\end{equation*}
For the determinant  we get
$$ {\rm det}\,g^\prime(a_1,a_2,h) = |G|^2 c_1c_2\left(  \frac{\varrho_1^2}{c_1}+\frac{\varrho_2^2}{c_2}-\gamma[\![\varrho]\!]\right).$$
Hence a flat equilibrium with prescribed masses of the phases
is isolated if
$$  \frac{\varrho_1^2}{c_1}+\frac{\varrho_2^2}{c_2}\neq \gamma[\![\varrho]\!] .$$
Observing that with $p^\prime = -\gamma (\varrho\circ p)$ we have
$$ \gamma c_1= \varrho_1(p(-\underline{h})) -  \varrho_1(p(h)),\quad \gamma c_2 = \varrho_2(p(h))-\varrho_2(p(\overline{h})),$$
and some simple algebra shows
\begin{equation}\label{lcuni}
\gamma [\![\varrho]\!] <  \frac{\varrho_1^2}{c_1}+\frac{\varrho_2^2}{c_2}.
\end{equation}
Therefore, all flat equilibria are isolated when the masses are prescribed. 
This inequality will also play an important role for the stability of flat equilibria.

\bigskip

\noindent
{\bf (ii)} {\em With phase transition.}
In this case it is more convenient to work with the function $\varphi$. We have
$$\nabla (\varphi\circ\varrho) = \varphi^\prime(\varrho) \nabla \varrho 
= p^\prime(\varrho)\nabla\varrho/\varrho = \nabla p(\varrho)/\varrho=-\gamma{\sf e_d},$$
hence we obtain
\begin{equation}
\label{eqpii}
\varrho(y) = \left\{
\begin{aligned} 
&\varphi^{-1}_1(a_1 -\gamma y),  && y\in [-\underline{h},h],\\ 
&\varphi^{-1}_2(a_2-\gamma y),   && y\in[h,\overline{h}],
\end{aligned}\right.
\end{equation}
where $h\in (-\underline{h},\overline{h})$ is fixed (defining the flat interface), and $a_j$ are arbitrary constants.
The function $p$ is well well-defined, provided
$$ a_1-\gamma[-\underline{h},h] \subset im(\varphi_1)\quad\mbox{and}\quad a_2-\gamma[h,\overline{h}] \subset im(\varphi_2).$$
The jump condition $[\![\varphi(\varrho)]\!]=0$ on the interface implies $a_1=a_2=:a$, and the pressure jump condition $[\![p]\!]=0$,
 valid for a flat interface, yields
$$ f(a,h):= \varrho^2_2(h)\psi_2^\prime(\varrho_2(h))-\varrho^2_1(h)\psi_1^\prime(\varrho_1(h)) =0.$$
This equation may or may not have solutions, but any solution defines a flat interface, if $\varrho$ is well-defined on $[-\underline{h},\overline{h}]$. For the derivative of $f$ we obtain
$$ f^\prime(a,h)= ([\![\varrho]\!], -\gamma[\![\varrho]\!]),  $$
which is nontrivial if $[\![\varrho]\!]\neq0$; we call such flat equilibria {\em non-degenerate}. 
Therefore, the non-degenerate flat equilibria form a 1-dimensional manifold {of class $C^2$.}

Let us consider conservation of total mass
$${\sf M}(\varrho,\Gamma):=\int_\Omega \varrho \,d(x,y) = \int_\Omega \varrho(p_0)\,dx
=:{\sf M}_0.$$
This yields at a flat equilibrium
$$M(a,h) := |G|(\int_{-\underline{h}}^h \varphi^{-1}_1(a-\gamma y)\,dy + \int_h^{\overline{h}} \varphi^{-1}_2(a-\gamma y)\,dy).$$
For the derivative of $M$ we obtain by an easy computation
$$M^\prime(a,h) = (|G|c,-|G|[\![\varrho]\!]), \quad c= \int_{-\underline{h}}^h \frac{\varrho_1}{p_1^\prime(\varrho_1)}\,dy
+ \int_h^{\overline{h}}\frac{\varrho_2}{p_2^\prime(\varrho_2)}\,dy.$$
Therefore, as
$${\rm det}\left[\begin{array}{c} M^\prime(a,h)\\ f^\prime(a,h)\end{array}\right]= |G|[\![\varrho]\!]([\![\varrho]\!]-\gamma c),$$
we see that a non-degenerate flat equilibrium is isolated within the class of given mass ${\sf M}_0$, 
provided $[\![\varrho]\!]\neq \gamma c$.
As  
{$\varrho_j^\prime =-\gamma \varrho_j/p_j'(\varrho_j)$ }
we have
$$ \gamma c = [\varrho_1(-\underline{h})-\varrho_1(h)] + [ \varrho_2(h)-\varrho_2(\overline{h})],$$
hence
\begin{equation}\label{lcunii}
\gamma c -[\![\varrho]\!] = \varrho_1(-\underline{h})-\varrho_2(\overline{h}).
\end{equation}
This quantity will be important in the stability analysis in Section 4.

\bigskip

\noindent
{\bf (c)} {\em Critical Points of Available Energy}\\
Now we want to give a motivation of the stability conditions involved below by a variational analysis. 
The arguments here are not completely rigorous,
but are such that they indicate the physics behind the models.

Let us consider the critical points of the available energy ${\sf E}$ with the mass constraints ${\sf M}_j={\sf M}_{j0}$ in case {\bf (i)} and
${\sf M}={\sf M}_1+{\sf M}_2 = {\sf M}_0$ in case {\bf (ii)}.
At smooth functions $\varrho$ and interfaces $\Gamma$ we obtain for the first variations of ${\sf E}, {\sf M_j}$ and ${\sf M}$
in the direction of $(\tau, g)$,
where $\Gamma$ is varied in the direction of the normal vector field $g \nu_{\Gamma}$, with $g: \Gamma \to\RR$ a sufficiently smooth function
\begin{align*}
\langle {\sf E}^\prime(\varrho,\Gamma) |(\tau,g)\rangle &= \int_\Omega (\varphi(\varrho) +\gamma y)\tau \,d(x,y)
	-\int_\Gamma\big([\![\varrho\psi(\varrho) +\gamma\varrho y]\!] + \sigma H_\Gamma\big) g\, d\Gamma\\
\langle {\sf M}_1^\prime(\varrho,\Gamma)|(\tau,g)\rangle &= \int_\Omega \chi_1\tau \,d(x,y) +\int_\Gamma\varrho_1 g\, d\Gamma\\
\langle {\sf M}_2^\prime(\varrho,\Gamma)|(\tau,g)\rangle &= \int_\Omega \chi_2\tau \,d(x,y) -\int_\Gamma\varrho_2 g\, d\Gamma\\
\langle {\sf M}^\prime (\varrho,\Gamma)|(\tau,g)\rangle &= \int_\Omega \tau\,d(x,y) -\int_\Gamma[\![\varrho]\!] g\,d\Gamma.
\end{align*}
We recall that $\varphi= (\varrho\psi)^\prime = \psi +\varrho \psi^\prime$. 
Moreover, $\chi_j$ denotes the characteristic function of $\Omega_j,$ $j=1,2,$ respectively. 
The method of Lagrange multipliers at a critical point yields constants $\mu_j$ such that
$$ {\sf E}^\prime +\mu_1 {\sf M}^\prime_1 + \mu_2 {\sf M}_2^\prime =0,$$
where $\mu_1=\mu_2=:\mu$ in case {\bf (ii)}. Varying first $\tau$ and then $g$ we obtain
$$ \varphi_j(\varrho) + \gamma y + \mu_j =0 \;\mbox{ in }\; \Omega_j,$$
and
$$[\![ \varrho\psi(\varrho) + \gamma \varrho y + \varrho\mu]\!] +\sigma H_\Gamma=0.$$
From these relations we deduce in both cases that a critical point is an equilibrium, besides the angle condition.

Next, let a flat equilibrium $(p_*,\Gamma_*)$ be given, and consider the second variation of  ${\sf E}$ with mass constraints. If the critical point $(p_*,\Gamma_*)$ is  a local minimum of ${\sf E}$ with constraints ${\sf M}_j={\sf M}_{j0}$ resp.\ ${\sf M}={\sf M}_0$, then the second variation
$$ {\sf C}_i := {\sf E}^{\prime\prime} +\mu_1 {\sf M}_1^{\prime\prime}+\mu_2 {\sf M}_2^{\prime\prime},\quad \mbox{resp.} \quad
{\sf C}_{ii} := {\sf E}^{\prime\prime} +\mu {\sf M}^{\prime\prime}$$
must be positive semi-definite on ${\sf N}({\sf M}_1^\prime)\cap{\sf N}({\sf M}^\prime_2)$ resp.\ on ${\sf N}({\sf M}^\prime)$. It is not difficult to compute
${\sf C}_\l$ at the equilibrium to the result
$$ \langle{\sf C}_\l (\tau,g)|(\tau,g)\rangle = \int_\Omega \varphi_*^\prime \tau^2\, d(x,y) 
	- \int_{\Gamma_*} \big(\sigma H_{\Gamma_*}^\prime g \cdot g + \gamma[\![\varrho_*]\!] g^2\big)\,d\Gamma_*$$
in both cases $\l\in \{i,ii\}$. For this we have taken into account the relations for critical points from above.  
Here we have used the notation
$\varrho_*=\varrho(p_*)$, and $\varphi^\prime_*=\varphi^\prime(\varrho_*)$.
As $\varphi^\prime= p^\prime/\varrho 
= 1/\varrho^\prime\varrho$,
we obtain by the Cauchy-Schwarz inequality
$$ \big(\int_{\Omega_j} \tau \,d(x,y)\big)^2 
= \big(\int_{\Omega_j} \sqrt{\varrho_*^\prime\varrho_*}\cdot \sqrt{\varphi_*^\prime}\,\tau d(x,y)\big)^2
\leq (\int_{\Omega_j} \varrho_*^\prime\varrho_* \,d(x,y)) \int_{\Omega_j} \varphi_*^\prime \tau^2 \,d(x,y),$$
with equality if $ \tau= \alpha_j \varrho_*\varrho_*^\prime$ for some constant $\alpha_j$.
Here  we set $\varrho^\prime_*=\varrho^\prime(p_*)$.
As $(\tau,g)\in {\sf N}({\sf M}_1^\prime)\cap{\sf N}({\sf M}^\prime_2)$, i.e.
$$\int_\Omega \chi_{*j}\tau \,d(x,y) = (-1)^j\int_{\Gamma_*}\varrho_{*j}\, g \,d\Gamma_*,\quad j=1,2,$$
and setting
$$ \tau=\alpha_j \varrho_*^\prime\varrho_*,\quad \alpha_j =(-1)^j\int_{\Gamma_*}\varrho_{*j}\, g \,d\Gamma_*/d_j,\quad d_j
= \int_\Omega \chi_{*j} \varrho_*^\prime\varrho_* \,d(x,y)$$
the property that ${\sf C}_i$ is positive semi-definite implies
$$-\int_{\Gamma_*} (\sigma H^\prime_{\Gamma_*} g\cdot g + [\![\gamma\varrho_*]\!] g^2) \,d\Gamma_* 
+ d_1^{-1}(\int_{\Gamma_*}\varrho_{1*}\,g \,d\Gamma_*)^2
+  d_2^{-1}(\int_{\Gamma_*}\varrho_{2*}\,g \,d\Gamma_*)^2\ge 0, $$
for all $g\in H^2_2(\Gamma_*)$ with $\partial_\nu g=0$ on $\partial G$ by the angle condition. In a similar way, in case {\bf (ii)} we get the necessary condition
 $$-\int_{\Gamma_*} (\sigma H^\prime_{\Gamma_*} g\cdot g + [\![\gamma\varrho_*]\!] g^2) \,d\Gamma_* 
 + d^{-1}\left(\int_{\Gamma_*}[\![\varrho_{*}]\!] g \,d\Gamma_*\right)^2\ge 0,$$
 where $d=\int_\Omega \varrho^\prime_*\varrho_*\,d(x,y)$.

Now we use the fact that $(p_*,\Gamma_*)$ is flat, i.e.\ $\Omega_1=\{(x,y):\, x\in G,\, -\underline{h} < y < h_*\}$ and
$\Omega_2=\{(x,y):\, x\in G,\, h_* < y < \overline{h}\}$. Then $H_{\Gamma_*}^\prime = \Delta_x$, and employing the decomposition 
$g= g_0 +{\sf g}$ with  $\int_G g_0\, dx =0$, these conditions become
$$ \int_G \big(-\sigma \Delta_x g_0 -\gamma[\![\varrho_*]\!] g_0\big)g_0 \,dx + 
\left( \frac{\varrho_{1*}^2}{c_1}+ \frac{\varrho_{2*}^2}{c_2}- \gamma[\![\varrho_*]\!]\right)|G|{\sf g}^2\ge 0,$$
in case {\bf (i)}, and
$$  \int_G \big(-\sigma \Delta_x g_0 -\gamma[\![\varrho_*]\!] g_0\big)g_0 \,dx 
+ \big([\![\varrho_*]\!]^2/c- \gamma[\![\varrho_*]\!]\big)|G|{\sf g}^2\ge 0$$
in case {\bf (ii)}.
As $g_0$ and ${\sf g}$ are independent, setting ${\sf g}=0,$ these inequalities imply that the operator 
$- (\sigma \Delta_x  + \gamma[\![\varrho_*]\!] )$ is positive semi-definite, which means
$\mu_1\geq \mu_*=\gamma[\![\varrho_*]\!]/\sigma$, where $\mu_1>0$ denotes the smallest 
nontrivial eigenvalue of the negative Neumann-Laplacian on $G$.
On the other hand, setting $g_0=0$ we obtain the necessary  conditions
$$\frac{\varrho_{1*}^2}{c_1}+ \frac{\varrho_{2*}^2}{c_2}\geq \gamma[\![\varrho_*]\!]$$
in case {\bf (i)}, which always holds as we have seen above, and
$$ [\![\varrho_*]\!]^2\geq \gamma c[\![\varrho_*]\!]$$
in case {\bf (ii)}.

As a summary of the above  considerations we have the following result.
\begin{thm}\label{variation}
{\bf (a)} Suppose that $(p_*,\Gamma_*)$ is a critical point of the available energy with prescribed masses.
Then $(p_*,\Gamma_*)$ is an equilibrium. 
\vspace{1mm}\\
{\bf (b)} Let $(p_*,\Gamma_*)$ be a flat equilibrium, which is a local minimum of the available energy with mass constraints. 
Then
\vspace{1mm}\\
{\bf ($\alpha$)} $\sigma\Delta_N + \gamma[\![\varrho_*]\!]$ is negative semi-definite on $L_2$-functions on $\Gamma_*$ with mean zero.
\vspace{1mm} \\
{\bf ($\beta$)} If phase transition is present, then $\gamma c[\![\varrho_*]\!]\leq [\![\varrho_*]\!]^2$.
\end{thm}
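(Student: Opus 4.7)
The theorem summarizes the variational considerations already carried out in Section~2(c); my plan is to organize those computations into a clean two-step argument, taking care of the admissibility of the variations.

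\textbf{Part (a): Euler--Lagrange equations.} I would apply the method of Lagrange multipliers to the constrained variational problem. At a critical point of $\mathsf{E}_a$ subject to the mass constraints, there exist constants $\mu_1,\mu_2$ with ${\sf E}_a^\prime+\mu_1{\sf M}_1^\prime+\mu_2{\sf M}_2^\prime=0$ in case \textbf{(i)}, and a single $\mu=\mu_1=\mu_2$ in case \textbf{(ii)}. Substituting the already computed first variations and testing first with density variations $\tau$ supported separately in each phase yields the bulk relation
$$\varphi_j(\varrho)+\gamma y+\mu_j=0\quad\text{in }\Omega_j.$$
Taking the gradient and using $\varphi^\prime(\varrho)\nabla\varrho=\nabla p/\varrho$ gives the hydrostatic balance $\nabla p+\gamma\varrho\,{\sf e}_d=0$. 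Next I would test with normal deformations $g$ of $\Gamma$; substituting the bulk relation into the resulting jump equation and using the identity $\varrho\varphi-\varrho\psi=p$ collapses the terms into $[\![p]\!]-\sigma H_\Gamma=0$. In case \textbf{(ii)}, the identification $\mu_1=\mu_2$ immediately forces $[\![\varphi(\varrho)]\!]=0$ on $\Gamma$ as the additional equilibrium condition. The angle condition is enforced by restricting admissible deformations $g$ to those satisfying $\partial_{\nu_G}g=0$ on $\partial\Gamma$.

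\textbf{Part (b): Necessary conditions at a local minimum.} I would compute the second variation $\mathsf{C}_\ell$ on the constraint manifold and use the local minimality to conclude $\langle\mathsf{C}_\ell(\tau,g)|(\tau,g)\rangle\ge 0$ for all $(\tau,g)$ tangent to the constraints. Using the bulk Euler--Lagrange relations the second variation reduces to the displayed expression involving $\int\varphi^\prime_*\tau^2$ and the boundary terms in $g$. For a flat equilibrium $H_{\Gamma_*}^\prime=\Delta_x$, so the boundary quadratic form splits nicely. The key step is to optimize in $\tau$: on each $\Omega_j$ the Cauchy--Schwarz inequality
$$\Big(\int_{\Omega_j}\tau\,d(x,y)\Big)^2\le\Big(\int_{\Omega_j}\varrho_*^\prime\varrho_*\,d(x,y)\Big)\int_{\Omega_j}\varphi_*^\prime\tau^2\,d(x,y)$$
becomes equality for $\tau=\alpha_j\varrho_*^\prime\varrho_*$; choosing $\alpha_j$ so that this $\tau$ lies in the constraint kernel (i.e.\ matches the prescribed $\int\chi_{*j}\tau$ in terms of $g$) produces a sharp quadratic inequality purely in $g$. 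Decomposing $g=g_0+{\sf g}$ with ${\sf g}$ constant and $\int_G g_0\,dx=0$, the two components decouple: setting ${\sf g}=0$ gives positive semi-definiteness of $-(\sigma\Delta_N+\gamma[\![\varrho_*]\!])$ on zero-mean functions, which is $(\alpha)$; setting $g_0=0$ gives the pointwise inequality in case \textbf{(ii)} which, using the identity $d=|G|c$, rearranges to $\gamma c[\![\varrho_*]\!]\le[\![\varrho_*]\!]^2$, yielding $(\beta)$. In case \textbf{(i)} the analogous condition $\gamma[\![\varrho_*]\!]\le \varrho_{1*}^2/c_1+\varrho_{2*}^2/c_2$ always holds by \eqref{lcuni}, so no extra statement is needed.

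The main obstacle is verifying that the minimizing $\tau=\alpha_j\varrho_*^\prime\varrho_*$ actually belongs to the tangent space of the mass constraints (so that the inequality from minimality genuinely applies) and keeping precise track of the signs coming from the two jump conventions when substituting first-order relations into the second variation. Once these are arranged, the reduction to a finite-dimensional quadratic form in $(g_0,{\sf g})$ is routine.
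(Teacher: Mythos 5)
Your proposal follows essentially the same route as the paper's own argument in Section~2(c): Lagrange multipliers for part~(a) with the identity $\varrho\varphi-\varrho\psi=p$ collapsing the jump relation to the Laplace--Young law (and $\mu_1=\mu_2$ giving $[\![\varphi]\!]=0$ in case~(ii)), and for part~(b) the second variation, the Cauchy--Schwarz optimization in $\tau=\alpha_j\varrho_*^\prime\varrho_*$ compatible with the mass constraints, and the decomposition $g=g_0+{\sf g}$ with ${\sf g}=0$ giving~($\alpha$) and $g_0=0$ giving~($\beta$), using $d=|G|c$ and \eqref{lcuni} in case~(i). This matches the paper's (admittedly formal) proof, including its treatment of the angle condition via the restriction $\partial_{\nu_G}g=0$ on admissible variations.
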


\noindent
This result shows that the Verigin problems with and without phase transition are thermodynamically consistent and stable. Below we investigate the dynamic stability of flat equilibria in a rigorous way.
\goodbreak
\section{Local Well-Posedness}
The two problems in question read\vspace{2mm}\\
{\bf (i)} {\em Without phase transition}
\begin{equation} 
\begin{aligned}\label{p1}
\varrho^\prime(p)\partial_t p - {\rm div} ( \varrho(p)k(\nabla p +\gamma\varrho(p){\sf e}_d))&=0 &&\mbox{in }\; \Omega\setminus\Gamma,\\
\partial_\nu p +\gamma\varrho(p)\nu\cdot {\sf e}_d &=0 && \mbox{on }\; \partial\Omega\setminus\partial\Gamma,\\
[\![p]\!]-\sigma{\rm div}_x(\beta(h)\nabla_x h) &=0 && \mbox{on }\; \Gamma,\\
\partial_{\nu_G} h &=0 && \mbox{on }\; \partial G,\\
[\![ k(\partial_y p -\nabla_x h\cdot\nabla_x p +\gamma\varrho(p))]\!]&=0 && \mbox{on }\; \Gamma,\\
\partial_t h + k(\partial_y p -\nabla_x h\cdot\nabla_x p +\gamma\varrho(p))&=0 &&  \mbox{on }\; \Gamma,\\
p(0)=p_0 \mbox{ in } \Omega\setminus\Gamma, \quad h(0)&=h_0 && \mbox{in }\; G.
\end{aligned}
\end{equation}
{\bf (ii)} {\em With phase transition}
\begin{equation}
\begin{aligned} \label{p2}
\varrho^\prime(p)\partial_t p - {\rm div} ( \varrho(p)k(\nabla p +\gamma\varrho(p){\sf e}_d))&=0 &&\mbox{in } \;\Omega\setminus\Gamma,\\
\partial_\nu p +\gamma\varrho(p)\nu\cdot {\sf e}_d &=0 && \mbox{on }\; \partial\Omega\setminus\partial\Gamma,\\
[\![p]\!]-\sigma{\rm div}_x(\beta(h)\nabla_x h) &=0 && \mbox{on }\; \Gamma,\\
\partial_{\nu_G} h &=0 && \mbox{on }\;\partial G,\\
[\![\varphi(\varrho(p)))]\!]&=0 && \mbox{on }\; \Gamma,\\
[\![\varrho(p)]\!]\partial_t h + [\![\varrho(p)k(\partial_y p -\nabla_x h\cdot\nabla_x p +\gamma\varrho(p))]\!]&=0 && \mbox{on }\; \Gamma,\\
p(0)=p_0 \mbox{ in } \Omega\setminus\Gamma, \quad h(0)&=h_0 && \mbox{in }\; G.
\end{aligned}
\end{equation}
We recall that the interface $\Gamma=\Gamma(t)=\Gamma_{h(t)}$ is given as the graph
$$ \Gamma_{h(t)}=\{ (x,h(t,x)):\, x\in G\},$$
defined by the height function $h(t,x)$.

\subsection{Transformation to a Fixed Domain}
We want show local well-posedness for initial interfaces which are $C^2$-close to a constant one, which by shifting $y$ can be assumed w.l.o.g.\ to be
$\Sigma:=G\times\{0\}$. For this purpose, we transform the time-varying domains $\Omega\setminus \Gamma(t)$ to the fixed one $\Omega\setminus \Sigma$, by means of the variable transformation
$$ \theta(x,y)=\theta_{h(t)}(x,y) = (x,y+\chi(y) h(t,x)),\quad x\in \overline{G},\; y\in[-\underline{h},\overline{h}].$$
Here $\chi(y)\in[0,1]$ means a $C^\infty$-cut-off function which equals $1$ for {$-\underline{h}/3< y<\overline{h}/3$} and $0$ outside of $(-2\underline{h}/3,2\overline{h}/3)$. Then
$$\vartheta(x)=\vartheta_{h(t)}(x) = \theta_{h(t)}(x,0)= (x,h(t,x)),\quad x\in G,$$
parameterizes the unknown interface $\Gamma(t)$ as a graph over $G$.

With
$$D\theta(x,y) = \left[ \begin{array}{cc}I&0\\ \chi(y)\partial_xh(t,x)& 1+\chi^\prime(y)h(t,x)\end{array}\right]$$
it is clear that $\theta$ is a $C^2$-diffeomorphism of $\Omega\setminus\Sigma$ onto $\Omega\setminus\Gamma$, leaving 
 $\partial\Omega$ invariant,
provided $h\in C^2$ and $|h|_\infty < 1/2|\chi^\prime|_\infty$. Note that the derivatives of $\theta$ are bounded up to order $2$.

Next we take a function $\pi(y)$ to scale the pressure vertically, i.e.\ we will choose {$\pi(y)=1$} for well-posedness, 
but $\pi(y)= p_*(y)$ where $(p_*,\Sigma)$ is a flat equilibrium, in the case of stability considerations. We will assume accordingly $\pi\in BU\!C^2([-\underline{h},0)\cup(0,\overline{h}]) $ continuous at $y=0$.

With this scaling we define the new variable $v(t,x,y)$ by means of
$$ v(t,x,y) = \frac{p(t,\theta_{h(t)}(x,y))}{\pi(\theta_{h(t)}(x,y))}= \frac{p(t,x,y+ \chi(y)h(t,x))}{\pi(y+\chi(y)h(t,x))},$$
for $t\geq0$, $x\in G$, $y\in [-\underline{h},\overline{h}]$. Then $v$ lives on the fixed domain $\Omega\setminus\Sigma$.
We have
\begin{align*}
\partial_y(p\circ\theta) &= \partial_y((\pi\circ\theta)v) = (\pi\circ\theta) \partial_y v + v (\pi^\prime\circ\theta)(1+\chi^\prime h)\\
                  &= (1+\chi^\prime h) \partial_yp \circ\theta,
\end{align*}
hence
$$ \partial_y p \circ \theta = \frac{\pi\circ\theta}{1+\chi^\prime h} \partial_y v + (\pi^\prime\circ\theta)v.$$
Next, we obtain for the time derivative
\begin{align*}
\partial_t(\varrho(p\circ\theta)) &= \varrho^\prime(p\circ\theta) \partial_t (p\circ\theta) = \varrho^\prime(p\circ\theta)\big(\partial_tp\circ\theta + \chi\partial_th \partial_y p\circ\theta\big)\\
&=\partial_t \varrho(v\pi\circ\theta) = \varrho^\prime(v\pi\circ\theta) \big((\pi\circ\theta)\partial_tv + v (\pi^\prime\circ\theta) \chi\partial_th\big),
\end{align*}
and so
$$ (\varrho^\prime (p)\partial_t p)\circ\theta = \varrho^\prime(v\pi\circ\theta)(\pi\circ\theta)
 \Big(\partial_t v -\frac{\chi\partial_t h}{1+\chi^\prime h} \partial_y v \Big).$$
Finally,
\begin{align*}
\nabla_x(p\circ\theta)&= \nabla_x p\circ\theta + (\partial_yp\circ\theta ) \chi \nabla_x h\\
&= \nabla_x ( v\pi\circ\theta) = (\pi\circ\theta) \nabla_x v + v (\pi^\prime\circ\theta) \chi\nabla_x h,
\end{align*}
which implies
$$\nabla_xp\circ \theta = (\pi\circ\theta)\Big( \nabla_x v - \frac{\chi\nabla_x h}{1+\chi^\prime h}\partial_y v\Big).$$
For the transformation of the boundary and interface conditions, observe that $\chi=1$ and $\chi^\prime=0$ near $y=h$, provided $|h|_\infty <\min\{ \underline{h},\overline{h}\}/3$, as well as $\chi=\chi^\prime=0$ near $y=-\underline{h},\overline{h}$. Then the boundary conditions at the top and at the bottom of $\Omega$ become
$$ \pi \partial_y v + \pi^\prime v +\gamma\varrho(v\pi)=0,\quad   x\in G, \;  y\in\{-\underline{h}, \overline{h}\}.$$
In virtue of $\partial_{\nu_G} h=0$, at the lateral boundary of $\Omega$, we have
\begin{align*} 0&= (\nu_G\cdot \nabla_x p)\circ\theta = (\pi\circ\theta) \Big(\nu_G\cdot\nabla_x v 
- \frac{\chi \nu_G\cdot\nabla_x h}{1+\chi^\prime h} \partial_y v\Big)= (\pi\circ\theta) \partial_{\nu_G} v,
\end{align*}
which means
$$ \partial_{\nu_G} v =0,\quad x\in\partial G, \; y\in (-\underline{h},\overline{h})\setminus\{0\}.$$
Here we used the fact that the normal of $G$ is preserved by the transformation, i.e.\ $\nu_G(x,y)$ is transformed to $\nu_G(x,y+\chi(y) h)$.

On the interface $\Sigma=G\times\{0\}$, the Laplace-Young law and the angle condition become
\begin{align*}
[\![(\pi\circ h) v]\!] -\sigma {\rm div}_x(\beta(h) \nabla_x h)&=0 \quad \mbox{ on  } \Sigma,\\
\partial_{\nu_G} h &=0\quad \mbox{ on } \partial G.
\end{align*}
In case {\bf (i)} we further have on $\Sigma$
{$$ [\![ k (\pi\circ h) \left((1+|\nabla_x h|^2)\partial_y v -\nabla_x h\cdot\nabla_x v\right)  +k\left(\gamma \varrho((\pi\circ h)v) + (\pi^\prime\circ h)v\right)]\!]=0,$$}
and
{$$\partial_t h + k(\pi\circ h)\left((1+|\nabla_x h|^2)\partial_y v -\nabla_x h\cdot\nabla_x v\right)  +k\left(\gamma \varrho((\pi\circ h)v) + (\pi^\prime\circ h)v\right)=0.$$}
In case {\bf (ii)} these two conditions have, instead, to be replaced by
$$ [\![\varphi( \varrho((\pi\circ h)v))]\!]=0,\quad x\in G,$$
and
\begin{equation*}
\begin{split}
 [\![\varrho((\pi\circ h)v)]\!] \partial_t h + &[\![\varrho((\pi\circ h)v)k(\pi\circ h)\left((1+|\nabla_x h|^2)\partial_y v -\nabla_x h\cdot\nabla_x v\right)  \\
&\quad  +k\left(\gamma \varrho((\pi\circ h)v) + (\pi^\prime\circ h)v\right)]\!]=0.
\end{split}
\end{equation*}
The velocity field in the new coordinates reads
$$ u\circ\theta = - k\Big((\pi\circ\theta) \big(\nabla_x v -\frac{\chi\nabla_x h}{1+\chi^\prime h} \partial_y v\big),
\frac{\pi\circ\theta}{1+\chi^\prime h} \partial_y v +(\pi^\prime\circ\theta) v+\gamma\varrho((\pi\circ\theta)v)\Big),$$
and the gradient transforms according to 
$$ \nabla \widehat{=} \left[\begin{array}{c}
\nabla_x  -\frac{\chi\nabla_x h}{1+\chi^\prime h} \partial_y\\
\vspace{-2mm}\\
 \frac{1}{1+\chi^\prime h} \partial_y\end{array}\right].$$

Therefore, balance of mass transforms into the following equation for $v$.
\begin{align*}
& \varrho^\prime((\pi\circ\theta)v) (\pi\circ\theta) (\partial_t v-\frac{\chi\partial_t h}{1+\chi^\prime h} \partial_y v) 
= (\varrho^\prime(p)\partial_t p)\circ\theta
= -{\rm div}(\varrho u)\circ\theta \\
&= \Big(\nabla_x -\frac{\chi \nabla_x h}{1+\chi^\prime h} \partial_y\Big)
\cdot \Big( k\varrho((\pi\circ\theta)v) (\pi\circ\theta) (\nabla_x v - \frac{\chi\nabla_x h}{1+\chi^\prime h} \partial_y v)\Big)\\
&+ \frac{1}{1+\chi^\prime h} \partial_y\Big( k\varrho((\pi\circ\theta)v)\big( \frac{\pi\circ\theta}{1+\chi^\prime h} \partial_y v + (\pi^\prime\circ\theta) v +\gamma\varrho((\pi\circ\theta)v)\Big),
\end{align*}
for all $x\in G$, $y\in (-\underline{h},\overline{h})$, $y\neq 0$, and $t>0$.
Introducing some abbreviations to show the underlying structure, the transformed problem for $(v,h)$ reads in case {\bf (i)}
\begin{equation}
\label{tr-diffusion}
\begin{aligned}
m(v,h) \partial_t v +\cA(v,h) v &= \cF(v,h) &&\mbox{in }\; \Omega\setminus\Sigma,\\
\pi\partial_y v + \pi^\prime v +\gamma \varrho(v\pi)&=0 &&\mbox{on }\; G\times\{-\underline{h},\overline{h}\},\\
\partial_{\nu_G} v&=0  &&\mbox{on }\; \partial G \times \{(-\underline{h},\overline{h})\setminus\{0\}\},
\end{aligned}
\end{equation}
\begin{equation}
\begin{aligned}\label{tr-LY}
[\![(\pi\circ h)v]\!]-\sigma {\rm div}_x(\beta(h)\nabla_x h)&=0 &&\mbox{on }\; \Sigma,\\
\partial_{\nu_G} h &=0  &&\mbox{on }\; \partial G,
\end{aligned}
\end{equation}
and
\begin{equation}
\begin{aligned}\label{tr-ici}
[\![\cB(h)v]\!]&=[\![\cG(v,h)]\!] && \mbox{on }\; \Sigma,\\
\partial_t h + \cB(h)v&=\cG(v,h)&& \mbox{on }\; \Sigma.
\end{aligned}
\end{equation}
In the case {\bf (ii)}, the last equations have to be replaced by
\begin{equation}
\begin{aligned}\label{tr-icii}
[\![\varphi(\varrho((\pi\circ h)v))]\!]&=0 && \mbox{on } \Sigma,\\
[\![\varrho((\pi\circ h)v)]\!]\partial_t h + [\![\varrho((\pi\circ h)v)\cB(h)v]\!]&=[\![\varrho((\pi\circ h)v)\cG(v,h)]\!] && \mbox{on } \Sigma.
\end{aligned}
\end{equation}
Here we employed the following notation, recalling that $\theta=\theta_h$ depends on $h$  and $(\pi\circ\theta)(t,x,0)=\pi(h(t,x))$.
\begin{align}\label{Functions}
m(v,h)&= \varrho^\prime((\pi\circ\theta)v)(\pi\circ\theta), \nn\\
-\cA(v,h)w &= \Big(\nabla_x -\frac{\chi \nabla_x h}{1+\chi^\prime h} \partial_y\Big)\cdot \Big( k\varrho((\pi\circ\theta)v) (\pi\circ\theta) (\nabla_x w - \frac{\chi\nabla_x h}{1+\chi^\prime h} \partial_y w)\Big) \nn \\
&\quad + \frac{1}{1+\chi^\prime h} \partial_y \Big( k\varrho(v\pi\circ\theta)\big( \frac{\pi\circ\theta}{1+\chi^\prime h} \partial_y w\Big), \nn\\
\cF(v,h)&= \frac{1}{1+\chi^\prime h}\Big[\partial_y \big(k\varrho((\pi\circ\theta)v)\big( (\pi^\prime\circ\theta) v  
+\gamma\varrho((\pi\circ\theta)v)\Big)  \\
&\quad + \varrho^\prime((\pi\circ \theta)v) (\pi\circ \theta) \chi \partial_t h \partial_y v\big], \nn \\
\cB(h)w&=  k (\pi\circ \theta) \big((1+|\nabla_x h|^2)\partial_y w -\nabla_x h\cdot\nabla_x w\big) , \nn\\
 \cG(v,h)&=- k\big((\pi^\prime\circ \theta)v +\gamma \varrho((\pi\circ \theta)v)  \big). \nn
\end{align}
These problems consist of a quasilinear parabolic equation for $v$ in the interior, supplemented by an evolution equation for $h$ and two nonlinear transmission conditions on the interface $\Sigma=G\times\{0\}$, besides (nonlinear) boundary conditions at the outer boundary $\partial\Omega$. Of course, initial conditions
$$ v(0)=v_0 \;\; \mbox{on }\Omega\setminus\Sigma,\quad h(0)=h_0 \; \;\mbox{in } G,$$
have to be imposed as well. These are non-standard problems for which no appropriate theory seems to exist. However, we may use the methods introduced by the authors for the Stefan problem with surface tension; in particular, we refer to Pr\"uss and Simonett \cite{PrSi16}, 
Sections 6--11.
In fact, Section 6.7 on the linearized Verigin problem in \cite{PrSi16} is dealing with the case of no boundary contact of the interface and no phase transition. The main point here is to extend this theory to allow for orthogonal boundary contacts, i.e.\ imposing the angle condition
$\nu_G\cdot\nu_\Gamma=0$ at $\partial G\cap \Gamma$. Once we have proved maximal regularity of the principal linearization for this problem,  we then may follow the lines in \cite{PrSi16}, Section 9, to obtain local well-posedness and to construct local state manifolds of the problems.
\subsection{Maximal $L_p$-Regularity}
We remind that in this section $\pi\equiv 1$.
The principal linearization of the two problems then reads as follows 

\begin{equation}
\begin{aligned}\label{plin1}
\partial_t v+\omega v + \cA_0 v &=f_v&&\quad \mbox{in }\;\;\Omega\setminus\Sigma,\\
\partial_y v&=g_b&&\quad \mbox{on }\;\; G\times\{-\underline{h},\overline{h}\},\\
\partial_\nu v &=a_b&&\quad \mbox{on }\;\; \partial G\times\{(-\underline{h},\overline{h})\setminus\{0\}\},
\end{aligned}
\end{equation}
\begin{equation}
\begin{aligned}\label{plin2}
[\![v]\!] +\sigma_0 \cA_\Sigma h &= g_h&&\quad \mbox{in }\;\; \Sigma,\\
\partial_\nu h&= a_h&&\quad \mbox{on }\;\; \partial G,
\end{aligned}
\end{equation}
\begin{equation}
\begin{aligned}\label{plini}
[\![\cB_0 v]\!]&= g_v&&\quad \mbox{in }\;\; \Sigma,\\
\partial_t h +\omega h+{\langle\cB_0 v\rangle} &= f_h&&\quad \mbox{in }\;\; \Sigma.
\end{aligned}
\end{equation}
This is for case {\bf (i)}; in case {\bf(ii)} the last equations have to be replaced by
\begin{equation}
\begin{aligned}\label{plinii}
[\![ v/\varrho_0]\!]&= g_v&&\quad \mbox{in }\;\; \Sigma,\\
\partial_t h +\omega h+[\![\varrho_0\cB_0 v]\!]/[\![\varrho_0]\!] &= f_h&&\quad \mbox{in }\;\; \Sigma.
\end{aligned}
\end{equation}
Here $\cA_0:= \cA(v_0,h_0)/m_0$, ${\cA_\Sigma h} = -( {\Delta_xh} -\beta_0^2\nabla_x h_0\cdot\nabla_x^2h \nabla_xh_0)$, $m_0:= m(v_0,h_0)$, $\cB_0:=\cB(h_0)$,{ $\sigma_0:=\sigma\beta_0$}, $\beta_0:=\beta(h_0)$, 
${\varrho_0=\varrho(v_0)}$, $\nu=\nu_G$, and the brackets {$\langle\cdot\rangle$} indicate the algebraic mean of a quantity across the interface $\Sigma= G\times\{0\}$. {The reason for introducing this mean value is that in \eqref{plini} the function $\mathcal{B}_0v$ has a nontrivial jump across the interface $\Sigma$}.
Furthermore, the functions  $(f_v,f_h,g_b,g_h,g_v,a_b,a_h)$ denote generic inhomogeneities, and of course we have to add initial conditions
$$ v(0)=v_0\quad \mbox{in }\Omega\setminus\Sigma,\quad h(0)=h_0\quad \mbox{in } G.$$
These problems are very similar to the linear Verigin problem studied in \cite{PrSi16}, Section 6.7, however, here we are concerned with a non-smooth domain $\Omega$ and the interface has boundary contact. Therefore, we cannot directly deduce maximal regularity from \cite{PrSi16}, Section 6.7, and we have to add some arguments. 

To ensure that the operators $\cA_\Sigma$ and $\cB_0$ are well-defined in $W_p^{2-1/p}(\Sigma)$ and $W_p^{1-1/p}(\Sigma)$, respectively, we assume in the sequel that $p>d$ and $1/2+1/p\le\mu\le 1$. Under these assumptions, the spaces $W_p^{m-1/p}(\Sigma)$, $m\in\{1,2\}$ are Banach algebras and $W_p^{1+2\mu-3/p}(\Sigma)\hookrightarrow W_p^{2-1/p}(\Sigma)$.

We begin with the definition of the regularity classes of the solutions and the data. In the $L_{p,\mu}$-setting with time weight $t^{1-\mu}$, $1/p+1/2\le \mu\le 1$,
$p\in (d,\infty)$, the solution spaces are according to \cite{PrSi16}, Section 6.7,
\begin{align}\label{solspace}
v&\in H^1_{p,\mu}(\RR_+;L_p(\Omega))\cap L_{p,\mu}(\RR_+;H^2_p(\Omega\setminus\Sigma)),\\
h&\in W^{3/2-1/2p}_{p,\mu}(\RR_+;L_p(G))\cap W^{1-1/2p}_{p,\mu}(\RR_+;H^2_p(G))\cap L_{p,\mu}(\RR_+;W^{4-1/p}_p(G)).\nn  
\end{align}
Trace theory yields the following regularity classes for the data, called {\bf (D)} below.
\goodbreak
\begin{align*}
&{\bf (a)}\; &&v_0\in W^{2\mu-2/p}_p(\Omega\setminus\Sigma),\quad h_0\in W^{2+2\mu-3/p}_p(G);\\
&{\bf (b)}\; &&f_v \in L_{p,\mu}(\RR_+;L_p(\Omega)),\quad f_h \in W^{1/2-1/2p}_{p,\mu}(\RR_+;L_p(G))\cap L_{p,\mu}(\RR_+;W^{1-1/p}_p(G));\\
&{\bf (c)}\; &&g_b(\cdot,k)\in  W^{1/2-1/2p}_{p,\mu}(\RR_+;L_p(G))\cap L_{p,\mu}(\RR_+;W^{1-1/p}_p(G)),\quad k\in\{-\underline{h},\overline{h}\};\\
&{\bf (d)}\;  &&a_b\in  W^{1/2-1/2p}_{p,\mu}(\RR_+;L_p(\partial G\times(-\underline{h},\overline{h}))\cap L_{p,\mu}(\RR_+;W^{1-1/p}_p(\partial G\times\{(-\underline{h},\overline{h})\setminus\{0\}\});\\
&{\bf (e)}\; &&g_h \in W^{1-1/2p}_{p,\mu}(\RR_+;L_p(G))\cap L_{p,\mu}(\RR_+;W^{2-1/p}_p(G));\\
&{\bf (f)}\; &&a_h\in W^{5/4-3/4p}_{p,\mu}(\RR_+;L_p(\partial G))\cap W^{1-1/2p}_{p,\mu}(\RR_+; W^{1-1/p}_p(\partial G))\cap L_{p,\mu}(\RR_+; W_p^{3-2/p}(\partial G));\\
&{\bf (gi)}\; && g_v \in W^{1/2-1/2p}_{p,\mu}(\RR_+;L_p(G))\cap L_{p,\mu}(\RR_+;W^{1-1/p}_p(G)).
\end{align*}
This is for case {\bf (i)}; for case {\bf (ii)} the last condition ought to be replaced by
$$ {\bf (gii)}\quad  g_v \in W^{1-1/2p}_{p,\mu}(\RR_+;L_p(G))\cap L_{p,\mu}(\RR_+;W^{2-1/p}_p(G)).$$
There are several compatibility conditions involved, called {\bf (C)} below.
\goodbreak
\begin{align*}
&{\bf (a)}\; &\partial_y v_0 &= g_b(0)\quad \mbox{on } G\times \{-\underline{h},\overline{h}\}\quad \mbox{for }\mu> 1/2+3/2p;\\
&{\bf (b)}\; & \partial_\nu v_0 &= a_b(0)\quad \mbox{on } \partial G \times \{(-\underline{h},\overline{h})\setminus\{0\}\}\quad \mbox{for }\mu> 1/2+3/2p ;\\
&{\bf (c)}\; & [\![v_0]\!] +\sigma_0\cA_\Sigma h_0&= g_h(0)\quad \mbox{in } G\quad \mbox{for }\mu>3/2p;\\
&{\bf (d)}\; & \partial_\nu h_0 &=a_h(0)\quad \mbox{on } \partial G\quad \mbox{for }\mu>  2/p-1/2;\\
&{\bf (ei)}\; & [\![\cB_0 v_0]\!]&=g_v(0)\quad \mbox{on } G\quad \mbox{for }\mu> 1/2+3/2p;\\
&{\bf (fi)}\; & h_1
&\in W^{4\mu-2-6/p}_p(G)\quad \mbox{for }\mu>  1/2+3/2p;  \\
&{\bf (g)}\;  & [\![a_b(0)]\!] +\partial_\nu\sigma_0\cA_\Sigma h_0 &= \partial_\nu g_h(0)\quad \mbox{for }\mu>  1/2+2/p;\\
&{\bf (h)}\; & \partial_t a_h(0)&= \partial_\nu h_1\quad \mbox{for }\mu> 3/4+7/4p.
\end{align*}
This pertains to case {\bf (i)}, with $h_1:= {f_h}(0)- {\langle\cB_0v_0\rangle}-{\omega h_0}$.
For case \textbf{(ii)} we have to replace {\bf (ei)} and {\bf (fi)} by
\begin{align*}
&{\bf (eii)}\; & &[\![v_0/\varrho_0]\!]=g_v(0)\quad \mbox{on } G\quad \mbox{for }\mu> 3/2p;\\
&{\bf (fii)}\; & &h_1:= {f_h}(0)- [\![\varrho_0\cB_0v_0]\!]/[\![\varrho_0]\!]-{\omega h_0}\in W^{4\mu-2-6/p}_p(G)\quad \mbox{for }\mu>  1/2+3/2p.\\
\end{align*}
Here the first {five} compatibilities are natural, taking the trace of the corresponding boundary conditions. The remaining compatibilities are somewhat hidden: the condition {\bf (f)} comes from the time trace of $\partial_th$ at $t=0$, and the last two conditions follow by taking the normal resp.\ time derivative  of \eqref{plin2} at $\partial G$. Actually, {\bf (g)} is the time trace at time 0 of the compatibility condition
\begin{equation}\label{spcomp}
 [\![a_b]\!] +\partial_\nu\sigma_0\cA_\Sigma h = \partial_\nu g_h.
\end{equation} 
The goal of maximal regularity theory is to prove the converse, i.e.\ if the data are subject to {\bf Conditions (D) and (C)}, then there is a unique solution 
of the problem with regularity \eqref{solspace}.

To achieve this goal, we first reduce to data which are all trivial except for $f_h$. This will be done in several steps.

\medskip

\noindent
{\bf Step 1.} We may reduce to {$(h_0,h_1)=(0,0)$} in the following way. Extend these initial data in their regularity class to all of $\RR^{d-1}$, and set
$$ \bar{h}_1= (2e^{-B_0 t}-e^{-2B_0 t})h_0 + (e^{-B_0^2t}-e^{-2B_0^2t})B_0^{-2}h_1,$$ 
where $B_0=I-\Delta_x$. It is not difficult to see that $\bar{h}_1$ has the right regularity; cf.\ \cite{PrSi16}, Section 6.6.
{Here we note that $(\bar{h}_1(0),\partial_t \bar{h}_1(0))=(h_0, h_1)$.}

\medskip

\noindent
{\bf Step 2.} Next we remove $a_h$ in the following way. We solve the problem
$$
\begin{aligned}
\partial_t \bar{h}_2+\bar{h}_2-\Delta_x \bar{h}_2 &=0 && \mbox{in }\; G,\\
\partial_\nu\bar{h}_2&=a_h-\partial_\nu\bar{h}_1 && \mbox{on }\;\partial G,\\
\bar{h}_2(0) &=0 && \mbox{in }\;\; G. 
\end{aligned}
$$
As $$a_h-\partial_\nu\bar{h}_1\in {_0W}^{1-1/p}_{p,\mu}(\RR_+;H^1_p(\partial G))\cap L_{p,\mu}(\RR_+; W^{3-2/p}_p(\partial G)),$$
the solution $\bar{h}_2$ satisfies
$$\bar{h}_2 \in {_0W}^{3/2-1/2p}_{p,\mu}(\RR_+;H^1_p(G))\cap L_{p,\mu}(\RR_+;W^{4-1/p}_p(G)),$$
i.e.\ has enough regularity, { as the latter space embeds into ${_0 W}^{1-1/2p}_{p,\mu}(\RR_+; H_p^2(G))$,
see for instance formula (4.36) in Section 4.5.5 of \cite{PrSi16} (with $A=I-\Delta,\ \alpha=1$ and $r=1/2$).}
Observe that here we need $\partial G\in C^{4}$. 

We sketch very briefly how to deal with this parabolic problem. Consider the special case of a half space $G=\RR_+^{d-1}$ and split the variable $x=(x',y)$, where $x'\in\RR^{d-2}$ and $y>0$. In this case, one has an explicit solution formula for $h$, which reads
$$h(y)=e^{-Ly}L^{-1}g,$$
where $g:=(a_h-\partial_y h_1)|_{y=0}$ and $L:=(\partial_t+I-\Delta_{x'})^{1/2}$ is the generator of the exponentially stable analytic $C_0$-smigroup $\{e^{-Ly}\}_{y\ge 0}$ in $L_{p,\mu}(\RR_+;L_p(\RR^{d-2}))$. Next, we note that $L^{-1}$ maps 
$${_0W}^{1-1/p}_{p,\mu}(\RR_+;H^1_p(\RR^{d-2}))\cap L_{p,\mu}(\RR_+; W^{3-2/p}_p(\RR^{d-2}))$$
into
$${_0W}^{3/2-1/p}_{p,\mu}(\RR_+;H^1_p(\RR^{d-2}))\cap L_{p,\mu}(\RR_+; W^{4-2/p}_p(\RR^{d-2})),$$
(see e.g. \cite{MS12}). We are now in a position to apply elementary semigroup theory in the space $L_{p,\mu}(\RR_+;H_p^1(\RR^{d-2}))$ to obtain the desired solution class for $\bar{h}_2$. For details, we refer the reader to \cite[Proposition 3.4.3]{PrSi16}. The general case for a bounded domain $G$ with boundary $\partial G\in C^{4}$ follows by means of local coordinates.

\medskip

\noindent
{\bf Step 3.} To remove the compatibility \eqref{spcomp}, we solve the problem
$$
\begin{aligned}
\partial_t \bar{h}_3 +\bar{h}_3-\Delta_x \bar{h}_3 &=0 && \mbox{in }\;G,\\
\partial_\nu\bar{h}_3&= g_3:=-[\![a_b]\!]+\partial_\nu g_h-\partial_\nu\sigma_0\cA_\Sigma(\bar{h}_1+\bar{h}_2) && \mbox{on }\;\partial G,\\
\bar{h}_3(0) &=0,&& \mbox{in }\; G. 
\end{aligned}
$$
As 
$$g_3\in {_0W}^{1/2-1/p}_{p,\mu}(\RR_+;L_p(\partial G))\cap L_{p,\mu}(\RR_+; W^{1-2/p}_p(\partial G)),$$
the unique solution $\bar{h}_3$ belongs to
$$\bar{h}_3 \in {_0W}^{1-1/2p}_{p,\mu}(\RR_+;L_p(G))\cap L_{p,\mu}(\RR_+;W^{2-1/p}_p(G)).$$
{This can be seen as in Step 2.} Then setting $A_\Sigma=\sigma_0\cA_\Sigma$ equipped with Neumann boundary conditions on $\partial G$, the function $\bar{h}_4:=(\partial_t^{1/2}+I+A_\Sigma)^{-1} \bar{h}_3$ belongs to
$$\bar{h}_4 \in {_0W}^{3/2-1/2p}_{p,\mu}(\RR_+;L_p(G))\cap {_0W}^{1-1/2p}_{p,\mu}(\RR_+;H^2_p(G))\cap L_{p,\mu}(\RR_+;W^{4-1/p}_p(G)),$$
i.e.\ it has the right regularity, $\partial_\nu \bar{h}_4=0$, and with $\bar{h}=\bar{h}_1+\bar{h}_2+\bar{h}_4$ we have
\begin{equation}\label{spcomp-bar}
 [\![a_b]\!] +\partial_\nu\sigma_0\cA_\Sigma \bar{h} = \partial_\nu g_h.
\end{equation} 
{\bf Step 4.} Next we remove $v_0$ and $a_b$ in the following way. We restrict these data to the upper part of $\Omega$ resp.\ $\partial \Omega$, i.e.\ $y>0$,
and extend them in the appropriate function space to the cylinder $\Omega\times\RR$ resp.\ $\partial\Omega\times\RR$. Solving the parabolic problem
$$
\begin{aligned}
\partial_t w_2 +w_2-\Delta_x w_2 -\partial_y^2 w_2&=0&& \mbox{in }\; G\times\RR,\\
\partial_\nu w_2&= a_b && \mbox{on }\; \partial G\times\RR,\\
w_2(0)&=v_0&& \mbox{in }\; G\times\RR,
\end{aligned}
$$
the solution $w_2$ belongs to
$$w_2\in H^1_{p,\mu}(\RR_+;L_p(G\times\RR))\cap L_{p,\mu}(\RR_+;H^2_p(G\times\RR)).$$
We do the same thing in the lower part of $\Omega$, to obtain $w_1$ in the same class. Then we set $\bar{v}_1= w_2$ for $y\in(0,\overline{h})$ and
$\bar{v}_1= w_1$ for $y\in (-\underline{h},0)$. This function has the right regularity and trivializes $v_0$ as well as $a_b$.

\medskip

\noindent
{\bf Step 5.} To remove the remaining data {$(f_v,g_b,g_v, g_h)$ we consider the parabolic transmission problem
$$
\begin{aligned}
\partial_t \bar{v}_2 +\omega \bar{v}_2+\cA_0\bar{v}_2 &= f_v -(\partial_t+\omega+\cA_0)\bar{v}_1&& \mbox{in } \;
\Omega\setminus\Sigma,\\
\partial_y \bar{v}_2 &= g_b - \partial_y \bar{v}_1&& \mbox{on }\; G\times\{-\underline{h},\overline{h}\},\\
\partial_\nu \bar{v}_2&=0&& \mbox{on }\; \partial G\times\{(-\underline{h},\overline{h})\setminus\{0\}\},\\
[\![\bar{v}_2]\!]&= g_h -\sigma_0\cA_\Sigma\bar{h}-[\![\bar{v}_1]\!]&& \mbox{in }\; \Sigma,\\
[\![\cB_0 \bar{v}_2]\!]&=g_v-[\![\cB_0 \bar{v}_1]\!] && \mbox{in }\; \Sigma,\\
\bar{v}_2(0)&=0 && \mbox{in }\; \Omega\setminus\Sigma.
\end{aligned}
$$
This problem can be solved in the same way as the transmission problems on smooth domains in \cite{PrSi16}, Section 6.3, employing in addition the reflection principle, to remove the ``corner points" $(x,y)$ where $x\in\partial G$ and  
$y\in\{-\underline{h},0,\overline{h}\}$.

This way we obtain maximal $L_p$-regularity
for this parabolic transmission problem. For an application of the reflection principle, it is important to observe (to be outlined below)
$$\partial_\nu[\![\bar{v}_2]\!]=\partial_\nu(g_h -\sigma_0\cA_\Sigma\bar{h}-[\![\bar{v}_1]\!])=0,$$
on $\partial\Sigma$ by \eqref{spcomp-bar} and the definition of $\bar{v}_1$ which yields $\partial_\nu[\![\bar{v}_1]\!]=[\![a_b]\!]$.

We provide here a brief guideline. Using a localization argument and change of coordinates, one may reduce  the preceeding problem for $\bar{v}_2$ to model problems which are of the following type:
\begin{enumerate}
\item Full space problems;
\item Half space problems;
\item Two-phase problems in a full space;
\item Quarter space problems and
\item Two-phase probems in a half space (with ninety degree contact angle).
\end{enumerate}
The first three types of model problems are well understood in an $L_p$-setting (see e.g.\ \cite[Section 6.7]{PrSi16}). For the last two classes of model problems one may apply the reflection principle to reduce (4) to (2) and (5) to (3). The change of coordinates during the localization procedure {ought} to be carried out in such a way that the normal direction $\nu$ is preserved under the corresponding diffeomorphisms. This way, one may extend all functions by even reflection from a quarter space to a half space or from a two-phase half space to a two-phase problem in a full space, thanks to the condition $\partial_\nu[\![\bar{v}_2]\!]=0$ on $\partial \Sigma$. For a detailed application of the reflection method, we refer the reader to \cite{Wi13}.

\medskip

\noindent
{\bf Step 6.} {By Steps 1 to 5, we may reduce the data in \eqref{plin1}, \eqref{plin2}, \eqref{plini}, and \eqref{plin1}, \eqref{plin2}, \eqref{plinii} to the special case $(f_v,g_b,g_h,g_v,a_b,a_h)=0$ with some (modified) function 
$$\tilde{f}_h\in W^{1/2-1/2p}_{p,\mu}(\RR_+;L_p(G))\cap L_{p,\mu}(\RR_+;W^{1-1/p}_p(G))$$ 
having the property $\tilde{f}_h(0)=0$, provided the trace exists.}

To solve the corresponding problems, one may employ the method explained in \cite{PrSi16}, Section 6.7 in case {\bf (i)} and in \cite{ PrSi16b} for the case {\bf (ii)} as well as the reflection principle, see Step 5. As a consequence, we obtain maximal $L_p$-regularity of the principal linearizations.

\begin{thm}\label{MR-PL}
Let $G\subset\RR^{d-1}$ be a bounded domain with boundary of class $C^{4}$, $\Sigma:=G\times\{0\}$, $\omega>0$, $p\in (d,\infty)$, $1/p+1/2\le\mu\le 1$, and $[\![\varrho_0]\!]\neq0$ in case {\bf (ii)}. \\
Then problems  \eqref{plin1}, \eqref{plin2}, \eqref{plini}, and \eqref{plin1}, \eqref{plin2}, \eqref{plinii} admit a unique solution $(v,h)$ with regularity
\begin{align}\label{solspace-thm}
v&\in H^1_{p,\mu}(\RR_+;L_p(\Omega))\cap L_{p,\mu}(\RR_+;H^2_p(\Omega\setminus\Sigma),\\
h&\in W^{3/2-1/2p}_{p,\mu}(\RR_+;L_p(G))\cap W^{1-1/2p}_{p,\mu}(\RR_+;H^2_p(G))\cap L_{p,\mu}(\RR_+;W^{4-1/p}_p(G)),\nn  
\end{align}
if and only if the data satisfy the corresponding regularity and compatibility conditions {\bf (D)} and {\bf (C)}. The solution depends continuously on the data in the corresponding spaces. The same result holds for $\omega=0$ for finite time intervals $J=(0,t_0)$.
\end{thm}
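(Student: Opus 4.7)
My plan is to establish sufficiency; the necessity direction follows at once from standard trace theorems applied to the solution class \eqref{solspace-thm}. For sufficiency I would carry out exactly the six-step reduction scheme sketched immediately before the theorem: each step trivializes one block of data while preserving the full regularity, leaving at the end only a modified inhomogeneity $\tilde f_h \in {_0}W_{p,\mu}^{1/2-1/2p}(\RR_+;L_p(G)) \cap L_{p,\mu}(\RR_+;W^{1-1/p}_p(G))$ with $\tilde f_h(0)=0$ when that trace exists. The residual problem is then precisely the linearized Verigin problem of \cite{PrSi16}, Section 6.7 (case \textbf{(i)}) or \cite{PrSi16b} (case \textbf{(ii)}), transplanted to the present capillary geometry.

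Concretely, I first absorb the initial data $(h_0,h_1)$ via the explicit semigroup ansatz with $B_0=I-\Delta_x$, then remove $a_h$ and enforce the hidden space--time compatibility \eqref{spcomp} by solving two auxiliary Neumann heat equations on $G$ whose classical maximal $L_p$-regularity uses the assumption $\partial G\in C^4$; the order gain from $\bar h_3$ to $\bar h_4$ is obtained by inverting $\partial_t^{1/2}+I+A_\Sigma$ on the appropriate ${_0}$-space. Next I restrict $(v_0,a_b)$ to each phase, extend vertically to cylinders $G\times\RR$, solve standard parabolic Neumann problems there, and glue the results to produce $\bar v_1$ with the right regularity that trivializes $v_0$ and $a_b$ simultaneously.

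The remaining task is a parabolic two-phase transmission system for $\bar v_2$ on $\Omega\setminus\Sigma$ with homogeneous bulk initial data, homogeneous Neumann data on the lateral boundary, and nontrivial transmission conditions on $\Sigma$ coupled to the equation for $h$. The key observation is that the spatial compatibility \eqref{spcomp-bar} secured in Step~3, together with $\partial_\nu[\![\bar v_1]\!]=[\![a_b]\!]$, forces $\partial_\nu[\![\bar v_2]\!]=0$ on $\partial\Sigma$, which is precisely the condition needed to perform \emph{even reflection} across $\partial G$. After a localization with charts chosen to preserve the direction of $\nu_G$, the model problems reduce to full and half space problems, quarter space problems, and two-phase half space problems; reflection maps the last two classes onto the first two, for which maximal regularity of Verigin-type transmission problems is contained in \cite{PrSi16}, Sections 6.3 and 6.7, and in \cite{PrSi16b}. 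Applying the Verigin theory to the final residual problem driven only by $\tilde f_h$ then closes the existence argument, and uniqueness together with continuous dependence follow at each step from the open mapping theorem applied to the bounded solution operator on the data space carved out by conditions \textbf{(D)} and \textbf{(C)}.

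I expect the main obstacle to be Step~5, the rigorous handling of the corner set $\partial G\cap\Sigma$: one must verify that the localization diffeomorphisms can indeed be chosen so that $\nu_G$ is preserved, that even reflection commutes with both the Neumann operator and with the transmission operators $[\![\cdot]\!]$ and $\cB_0$, and that the reflected data still lie in the correct weighted anisotropic Sobolev--Slobodeckii classes. This is where the assumption $\partial G\in C^4$ is consumed, and where the detailed reflection calculations of Wilke \cite{Wi13} are directly imported to cover the residual two-phase quarter-space model problem with orthogonal contact angle.
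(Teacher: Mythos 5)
Your proposal follows essentially the same route as the paper's own argument: the identical six-step reduction (semigroup ansatz for $(h_0,h_1)$, auxiliary Neumann heat problems on $G$ for $a_h$ and the hidden compatibility \eqref{spcomp}, the shift by $(\partial_t^{1/2}+I+A_\Sigma)^{-1}$, cylinder extensions for $(v_0,a_b)$, the transmission problem for $\bar v_2$ resolved by localization and even reflection using $\partial_\nu[\![\bar v_2]\!]=0$ on $\partial\Sigma$, and the final residual problem driven by $\tilde f_h$ handled via \cite{PrSi16}, Section 6.7, resp.\ \cite{PrSi16b}, with the corner analysis imported from \cite{Wi13}). It is correct and correctly identifies Step 5 as the place where $\partial G\in C^4$ and the reflection calculations are consumed, so no further comment is needed.
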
 
As mentioned before, having this maximal $L_p$-regularity result at disposal, we may now follow the arguments in \cite{PrSi16}, Chapter 9, to obtain local well-posedness of the nonlinear problems under considerations. We do not want to repeat the details of the proof here. However, for further reference we define in the original variables the nonlinear state manifolds $\cSM_i$, in which the local semiflow generated by the problems lives, as follows. 
\begin{align}\label{SMi}
(p,h)\in \cSM_i \ \Leftrightarrow\ (p,h)\in W^{2-2/p}_p(\Omega\setminus\Gamma)\times W^{4-3/p}_p(G), \; \mbox{with {\bf(CSMi)}}. 
\end{align}
\begin{align}\label{SMii}
(p,h)\in \cSM_{ii}\  \Leftrightarrow\ (p,h)\in W^{2-2/p}_p(\Omega\setminus\Gamma)\times W^{4-3/}_p(G), \; \mbox{with {\bf(CSMii)}}. 
\end{align}
Here the condition {\bf (CSMi)} is defined by 
$$\begin{aligned}
\partial_\nu p +\gamma\varrho(p)\nu\cdot {\sf e}_d &=0 &&  \mbox{on }\; \partial\Omega\setminus\partial\Gamma,\\
[\![p]\!]-\sigma{\rm div}_x(\beta(h)\nabla_x h) &=0 &&  \mbox{on }\; \Gamma,\\
\partial_{\nu_G} h &=0 &&  \mbox{on }\; \partial G,\\
[\![ k(\partial_y p -\nabla_x h\cdot\nabla_x p +\gamma\varrho(p))]\!]&=0&&  \mbox{on }\; \Gamma.
\end{aligned}$$
Correspondingly, condition {\bf (CSMii)} reads
$$\begin{aligned}
{ }[\![\varrho(p)]\!]&\neq0 &&  \mbox{on }\; \Gamma,\\
\partial_\nu p +\gamma\varrho(p)\nu\cdot {\sf e}_d &=0 &&  \mbox{on }\; \partial\Omega\setminus\partial\Gamma,\\
[\![p]\!]-\sigma{\rm div}_x(\beta(h)\nabla_x h) &=0 &&  \mbox{on } \;\Gamma,\\
\partial_{\nu_G} h &=0 &&  \mbox{on }\;\partial G,\\
[\![\varphi(\varrho(p))]\!]&=0&&  \mbox{on }\; \Gamma.
\end{aligned}$$
The exponent $p\in (1, \infty)$ is chosen in such a way that the initial space $W^{2\mu-2/p}_p\times W^{2+2\mu-3/p}_p$ embeds into $C^1\times C^2$, which requires
$$ 1\geq\mu>\frac{1}{2}+\frac{d+2}{2p},\quad p>d+2.$$
Then the velocity $u$ is well-defined and continuous in the phases, the curvature $H_\Gamma$ is even in $C^1$, and the normal velocity $V_\Gamma$ is continuous. In particular, the equations on the interface are valid pointwise  

\section{Stability of Flat Equilibria}
We first study the full linearization at a flat equilibrium $(p_*,\Sigma)$. Recall that the function $\pi$ of Subsection 3.1 will now 
be chosen as $\pi=p_*$ and  that $p^\prime_*=-\gamma \varrho\circ p_*$.
{Moreover, by shifting $y$ we may assume that $\Sigma=G\times\{0\}$.}

\subsection{Linearization}
In case {\bf(i)} the full linearization at a flat equilibrium $(p_*,\Sigma)$ reads
\begin{equation}
\begin{aligned}\label{lin1}
m_* \partial_t w +\cA_* w &= f_w && \mbox{in }\; \Omega\setminus\Sigma, \\
\cB_* w &=g_b  && \mbox{on }\; G\times \{-\underline{h},\overline{h}\},\\
\partial_{\nu_G} w&=0 && \mbox{on }\; \partial G \times \{(-\underline{h},\overline{h})\setminus\{0\}\}, 
\end{aligned}
\end{equation}

\smallskip
\begin{equation}
\begin{aligned}\label{lin2}
[\![w]\!] +\cA_\Sigma h &=g_h  &&  \mbox{in }\;G,\\
\partial_{\nu_G} h &=0  && \mbox{on }\; \partial G, 
\end{aligned}
\end{equation}

\smallskip
\begin{equation}
\begin{aligned}\label{lin3i}
[\![\cB_* w]\!] &= g_w     &&\mbox{on }\; \Sigma,\\
\partial_t h + \cB_* w &= f_h  && \mbox{on }\; \Sigma, 
\end{aligned}
\end{equation}
where the initial conditions $w(0)=w_0$ and $h(0)=h_0$ have to be added. Here we employed the notations
\begin{equation}\label{AA}
\begin{aligned}
\varrho_*&= \varrho(p_*),\quad \varrho^\prime_*= \varrho^\prime(p_*), \quad m_*= p_*\varrho^\prime_*, 
\quad p_b=p_*(0),\\
\cA_* w &= - {\rm div}_x(kp_*\varrho_*\nabla_x w) -\partial_y(\varrho_*\cB_* w),\\
 \cB_* w &={kp_*(\partial_yw +\gamma(\varrho^\prime_* - \varrho_*/p_*)w),}\\
\cA_\Sigma h &= -(\sigma/p_b)\Delta_x h- (\gamma[\![\varrho_*]\!]/p_b)h. 
\end{aligned}
\end{equation}

\bigskip\noindent
Recall that $[\![p_*]\!]=0$ as the equilibrium interface is flat and therefore, $p_b$ is well-defined.

In the case {\bf(ii)} with phase transition the last two equations have to be replaced by
 \begin{equation}
 \begin{aligned}\label{lin3ii}
[\![w/\varrho_*]\!] &= g_w &&\mbox{on }\; \Sigma,\\
[\![\varrho_*]\!]\partial_t h+ [\![\varrho_*\cB_* w]\!] &= f_h \ && \mbox{on }\; \Sigma,
\end{aligned}
\end{equation}
The problems \eqref{lin1}--\eqref{lin3i} and \eqref{lin1}--\eqref{lin2},\eqref{lin3ii} are lower order perturbations of the principal linearizations studied in the previous section. Therefore, they have the property of maximal $L_p$-regularity as well, in the same function spaces. As a consequence, the underlying operators $L_i$ and $L_{ii}$ defined by
$$ L_i(w,h) = (\cA_* w/m_*,{(\cB_*w)|_\Sigma})\quad L_{ii}(w,h) = (\cA_*w/m_*,[\![\varrho_*\cB_*w]\!]/[\![\varrho_*]\!] )$$
with domains
$${\sf D}(L_i) =\{ (w,h)\in H^2_p(\Omega\setminus \Sigma)\times  W^{4-1/p}_p(G):\, {\bf (BCi)} \},$$
resp.
$${\sf D}(L_{ii}) =\{ (w,h)\in H^2_p(\Omega\setminus \Sigma)\times  W^{4-1/p}_p(G):\, {\bf (BCii)} \},$$
where the boundary and interface conditions are defined by
\begin{equation*}
{\bf (BCi)}\quad
\left.
\begin{aligned}
\cB_*w&= 0 \mbox{ on }  G\times\{-\underline{h},\overline{h}\},
\quad &\partial_{\nu_G} w&=0 \mbox{ on } \partial G\times\{(-\underline{h}, \overline{h})\setminus\{0\}\}\\
{[\![w]\!] +\cA_\Sigma h }&=0  \mbox{ in }  G,
\quad &\partial_{\nu_G} h&=0 \mbox{ on } \partial G, \\
[\![\cB_* w]\!]&=0 \mbox{ on } \Sigma,\quad & {(\cB_* w)|_\Sigma} &\in  {W^{2-2/p}_p(\Sigma),} \\
\end{aligned}
\right.
\end{equation*}
and
\begin{equation*}
{\bf (BCii)}\quad
\left.
\begin{aligned}
\cB_*w&= 0 \mbox{ on }  G\times\{-\underline{h},\overline{h}\},\quad& \partial_{\nu_G} w&=0 \mbox{ on } \partial G\times\{(-\underline{h}, \overline{h})\setminus\{0\}\}\\
{[\![w]\!] +\cA_\Sigma h }&=0  \mbox{ in }  G,
\quad &\partial_{\nu_G} h&=0 \mbox{ on } \partial G, \\
 [\![ w/\varrho_*]\!]&=0 \mbox{ on } \Sigma, \quad &[\![\varrho_* \cB_* &w]\!] \in {W^{2-2/p}_p(\Sigma),} &&
\end{aligned}
\right.
\end{equation*}
are negative generators of analytic $C_0$-semigroups in $X_0= L_p(\Omega)\times W^{2-2/p}_p(G)$ which by boundedness of $\Omega$ are in addition compact. Therefore, the resolvents of these operators are compact as well, and their spectra consist only of countably many eigenvalues of finite multiplicity, which by elliptic regularity are independent of $p$. So it is enough to study these eigenvalues for $p=2$, and to characterize normal stability and normal hyperbolicity of the full linearizations. The main result of this section reads as follows.

\begin{thm}\label{linearstab}
Let $(p_*,\Sigma)$ be a non-degenerate flat equilibrium, $\mu_*:= \gamma[\![\varrho_*]\!]/\sigma$,  and $[\![\varrho_*]\!]\neq0$ in case {\bf (ii)}.
Moreover, let $L_i$ and $L_{ii}$ be the linearizations defined above, and let $\mu_1>0$ denote the smallest nontrivial eigenvalue of the negative Neumann-Laplacian $-\Delta_N$ on $G$.  Then \vspace{2mm}\\
{\bf(a)} $L_i$ and $L_{ii}$ have only real eigenvalues. \vspace{1mm}\\
{\bf (b)} If $\mu_*\leq\mu_1$ then $-L_i$ has no positive eigenvalues, and $-L_{ii}$ has no positive eigenvalues if in addition 
$$  [\![\varrho_*]\!](\varrho_*(-\underline{h})- \varrho_*(\overline{h}))\leq0. $$
{\bf (c)} Suppose $\mu_*\not\in\sigma(-\Delta_N)$. Then ${\rm dim}\, {\sf N}(L_i)=2$ and ${\rm dim}\, {\sf N}(L_{ii})=1$. \vspace{1mm} \\
{\bf (d)}  {Suppose $\mu_*\not\in\sigma(-\Delta_N)$.} 
Then the eigenvalue 0 is semi-simple for $L_i$ and it is semi-simple also for $L_{ii}$ if in addition $ \varrho_*(-\underline{h})\neq \varrho_*(\overline{h})$. \vspace{1mm}\\
{\bf (e)} Assuming conservation of masses, we have $0\not\in \sigma(L_i)$, and also $0\not\in \sigma(L_{ii})$ if in addition $ \varrho_*(-\underline{h})\neq \varrho_*(\overline{h})$.\vspace{2mm}\\
In particular, if $\mu_*<\mu_1$ then $L_i$ is {\bf normally stable}, and $L_{ii}$ is so if in addition 
\begin{equation}\label{normstabii}[\![\varrho_*]\!](\varrho_*(\overline{h})-\varrho_*(-\underline{h}))>0.\end{equation} 
Let $\mu_*\not\in \sigma(-\Delta_N)$. If $\mu_*>\mu_1$ then $L_i$ and $L_{ii}$ are {\bf normally hyperbolic}, and $L_{ii}$ has this property also if $\mu_*<\mu_1$ and the inequality in \eqref{normstabii} is reversed.
\end{thm}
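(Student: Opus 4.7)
The plan is to perform a change of variable that symmetrizes the linearization, derive one scalar energy identity that encodes the entire spectral picture, and then extract parts (a)--(e) from it; the concluding assertions on normal stability and normal hyperbolicity follow at once by combining them.

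Set $\tilde q := p_*w/\varrho_*$. Using the equilibrium relations $p_*' = -\gamma\varrho_*$ and $\varrho_*'\varrho_* = 1/\varphi_*'$, a short calculation gives
\begin{equation*}
\cA_*w = -\mathrm{div}(k\varrho_*^2\nabla\tilde q),\qquad \cB_*w = k\varrho_*\partial_y\tilde q,\qquad m_*w = \tilde q/\varphi_*',
\end{equation*}
while $\cB_*w = 0$ on $G\times\{-\underline h,\overline h\}$ and $\partial_{\nu_G}w = 0$ on the lateral boundary become homogeneous Neumann conditions on $\tilde q$. Testing the eigenvalue equation $L_i(w,h) = \lambda(w,h)$ (respectively $L_{ii}$) against $\overline{\tilde q}$, integrating over $\Omega\setminus\Sigma$, and inserting the interface relations from $D(L_i)$ (namely $[\![\varrho_*\tilde q]\!] = -p_b\cA_\Sigma h$, $[\![k\varrho_*\partial_y\tilde q]\!] = 0$, $k\varrho_*\partial_y\tilde q|_\Sigma = \lambda h$), or from $D(L_{ii})$ ($[\![\tilde q]\!] = 0$, $[\![k\varrho_*^2\partial_y\tilde q]\!] = \lambda[\![\varrho_*]\!]h$), produces in both cases the common identity
\begin{equation*}
\lambda\,\Bigl[\int_\Omega\frac{|\tilde q|^2}{\varphi_*'}\,d(x,y) + \sigma\int_G|\nabla_x h|^2\,dx - \gamma[\![\varrho_*]\!]\int_G|h|^2\,dx\Bigr] = \int_\Omega k\varrho_*^2|\nabla\tilde q|^2\,d(x,y).
\end{equation*}
Denote the bracket by $A$ and the right-hand side by $B\ge 0$.

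Part (a) is immediate: $A,B\in\RR$, so either $A\neq 0$ and $\lambda = B/A\in\RR$, or $A = B = 0$, whence $\nabla\tilde q\equiv 0$ and the eigenvalue equation forces $(w,h) = 0$ for $\lambda\neq 0$. For (b), decompose $h = h_0 + h^\perp$ with $h_0$ constant and $h^\perp$ of zero mean on $G$: the Neumann--Poincar\'e inequality gives $\sigma\int|\nabla h^\perp|^2 \ge \sigma\mu_1\int|h^\perp|^2 \ge \gamma[\![\varrho_*]\!]\int|h^\perp|^2$ under $\mu_*\le\mu_1$. For the $x$-constant Fourier mode $\tilde q_0(y):=|G|^{-1}\int_G\tilde q\,dx$, integrating the 1D eigenvalue ODE over $(-\underline h,0)$ and $(0,\overline h)$ and using the interface conditions yields, for $\lambda\neq 0$, the identities $\int_{-\underline h}^0\tilde q_0/\varphi_1'\,dy = \varrho_{*1}(0)h_0$ and $\int_0^{\overline h}\tilde q_0/\varphi_2'\,dy = -\varrho_{*2}(0)h_0$ in case (i); the Cauchy--Schwarz inequality then gives $\int|\tilde q_0|^2/\varphi_*'\,dy \ge(\varrho_{*1}^2/c_1+\varrho_{*2}^2/c_2)|h_0|^2$, and the structural inequality \eqref{lcuni} closes $A\ge 0$. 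The analogue in case (ii) uses the summed identity $\int\tilde q_0/\varphi_*'\,dy = -[\![\varrho_*]\!]h_0$ together with \eqref{lcunii} and the additional sign hypothesis $[\![\varrho_*]\!](\varrho_*(-\underline h)-\varrho_*(\overline h))\le 0$.

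For (c), $\lambda = 0$ forces $B = 0$, so $\tilde q$ is piecewise constant, and the Laplace--Young equation $-\sigma\Delta_x h - \gamma[\![\varrho_*]\!]h = [\![\varrho_*\tilde q]\!]$ with $\partial_{\nu_G}h = 0$ is uniquely solvable when $\mu_*\notin\sigma(-\Delta_N)$, leaving two free phase-constants in case (i) and one in case (ii) (identified by $[\![\tilde q]\!] = 0$). Parts (d) and (e) reduce to one algebraic observation: for $(v,g)$ with $L_i(v,g) = (w,h)\in\ker L_i$ (respectively $L_{ii}$), integrating $\cA_*v/m_* = w$ over each phase (case (i)) or over $\Omega$ (case (ii)) by the divergence theorem and inserting the interface data produces additional linear relations among the kernel parameters. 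Assembled with the relations defining the kernel (and, for (e), the mass-conservation constraints) they form a square homogeneous system whose determinant is, up to a nonzero factor, $\varrho_{*1}^2/c_1+\varrho_{*2}^2/c_2 - \gamma[\![\varrho_*]\!]$ in case (i), strictly positive by \eqref{lcuni}, or $\gamma c - [\![\varrho_*]\!] = \varrho_{*1}(-\underline h) - \varrho_{*2}(\overline h)$ in case (ii), nonzero under the stated extra hypothesis by \eqref{lcunii}; hence only the trivial solution remains. The concluding statements are then immediate: under $\mu_* < \mu_1$ (and \eqref{normstabii} in case (ii)) parts (a)--(e) place the spectrum in $[0,\infty)$ with $0$ isolated and semi-simple, giving normal stability; under $\mu_* > \mu_1$ with $\mu_*\notin\sigma(-\Delta_N)$, taking $h$ in the $\mu_1$-eigenspace of $-\Delta_N$ makes $A$ negative, so the variational characterization yields a strictly negative eigenvalue while (d), (e) keep $0$ semi-simple, i.e.\ normal hyperbolicity. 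The main technical obstacle is the $x$-constant Fourier mode of step (b): the Poincar\'e inequality is vacuous there, so the missing coercivity must be extracted from the one-dimensional ODE constraints on $\tilde q_0$ together with the quantitative identities \eqref{lcuni}, \eqref{lcunii} of Section~2.
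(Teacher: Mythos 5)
Your parts (a)--(e) are essentially the paper's own argument in a transparent disguise: the substitution $\tilde q=p_*w/\varrho_*$ turns the identity you derive into exactly the fundamental identity \eqref{evseq} (note $n_*|w|^2=|\tilde q|^2/\varphi_*'$, $kp_*^2|\nabla_xw|^2+|\cB_*w|^2/k=k\varrho_*^2|\nabla\tilde q|^2$), and your treatment of the $x$-constant mode via the phase-wise integrals, Cauchy--Schwarz and \eqref{lcuni}, \eqref{lcunii}, as well as the kernel computation and the divergence-theorem relations for (d), (e), all coincide with Subsections 4.2--4.4 of the paper (your purely linear-algebraic route to semi-simplicity is a harmless variant of the paper's, which additionally pairs with $wp_*/\varrho_*$). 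So up to and including the normal stability conclusion the proposal is sound.

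The genuine gap is the normally hyperbolic case. Your identity $\lambda A=B$ holds only at an eigenpair, with $A$ and $B$ built from that eigenfunction; it constrains eigenvalues that already exist but proves nothing about existence. The sentence ``taking $h$ in the $\mu_1$-eigenspace of $-\Delta_N$ makes $A$ negative, so the variational characterization yields a strictly negative eigenvalue'' appeals to a variational principle you have not established: because the quadratic form $A$ is indefinite when $\gamma[\![\varrho_*]\!]>0$ and because $w$ and $h$ are coupled through the interface conditions, there is no Rayleigh-quotient or min--max characterization available off the shelf, and an admissible trial pair $(w,h)$ cannot be produced by choosing $h$ alone. This is precisely why the paper spends all of Subsection 4.5 on the instability direction: it reduces $\lambda>0\in\sigma(-L_\l)$ to $0\in\sigma(B_\lambda)$ for the Neumann-to-Dirichlet pencil $B_\lambda=\lambda T_\lambda+\cA_\Sigma$, computes the strong limit $B_0$ as $\lambda\to0+$ through the projections onto ${\sf N}(A_\l)$, proves coercivity of $B_\lambda$ for large $\lambda$ by a compactness argument, and concludes by an eigenvalue-crossing argument. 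Your proposal contains none of this, and it also does not address at all the second instability mechanism in case \textbf{(ii)}, namely $\mu_*<\mu_1$ with the inequality in \eqref{normstabii} reversed, where the unstable mode is the constant ($\mathsf h$-) mode and the $\mu_1$-eigenspace argument you invoke is irrelevant; in the paper this case is detected exactly by the $\mathsf h$-component of $B_0$ via \eqref{lcunii}. Finally, for normal stability you should also record that $\dim{\sf N}(L_\l)$ equals the dimension of the manifold of flat equilibria (2 resp.\ 1), since this is part of the normally stable/hyperbolic conditions of \cite{PSZ09}, though this follows from your part (c).
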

\noindent
Observe that $[\![\varrho_*]\!]\leq 0$ implies $\varrho_*(-\underline{h}) > \varrho_*(\overline{h})$, as $\varrho$ is strictly increasing and $p_*$ is strictly decreasing.
Theorem \ref{linearstab} will be proved in the remainder of this section.

\bigskip

\subsection{The Eigenvalue Problem}
Suppose that $\lambda\in \CC$ is an eigenvalue of {$-L_i$ or $-L_{ii}$} with nontrivial eigenvector $(w,h)$. Multiplying the eigenvalue equation for $w$ with
$\bar{w}p_*/\varrho_*$ and integrating by parts we get, employing the boundary conditions at $\partial \Omega$
\begin{align*}
0&= \lambda \int_\Omega |w|^2 n_* \,d(x,y) +\int_\Omega [k p_*^2|\nabla_x w|^2 + |\cB_* w|^2/k]\,dx\\
&+ \int_G p_*[\![\cB_* w \bar{w}]\!] \,dx.
\end{align*}
with $n_* = m_* p_*/\varrho_*=  \varrho_*^\prime p_*^2/\varrho_*$.
In case {\bf(i)} we obtain by the interface conditions
$$ [\![\cB_* w \bar{w}]\!]= \cB_* w [\![\bar{w}]\!] =\lambda h\overline{\cA_\Sigma h},$$
which implies the fundamental eigenvalue identity
\begin{align}\label{evseq}
0&= \lambda[ \int_\Omega |w|^2 n_* \,d(x,y) + \int_G h p_*\overline{\cA_\Sigma h} \,dx] 
+\int_\Omega [k p_*^2|\nabla_x w|^2 + |\cB_* w|^2/k] \,dx,
\end{align}
In case {\bf(ii)} we obtain accordingly
\begin{align*}
 [\![\cB_* w \bar{w}]\!]&= [\![\varrho_*\cB_*w \bar{w}/\varrho_*]\!] =[\![\varrho_*\cB_*w]\!] \bar{w}/\varrho_*\\
 &=-\lambda h [\![\varrho_*]\!] \bar{w}/\varrho_* = -\lambda h [\![\bar{w}]\!]= \lambda h\overline{\cA_\Sigma h},
\end{align*}
hence we again arrive at identity \eqref{evseq}. Note that
$$\int_G h p_*\overline{\cA_\Sigma h} \,dx = \int_G [\sigma|\nabla_x h|^2 -\gamma[\![\varrho_*]\!]|h|^2] \,dx$$
is real. Identity \eqref{evseq} is of the form $\lambda a + b=0$, where $a,b$ are real and $b\geq0$. If $b\neq0$ this implies $a\neq0$ and $\lambda\neq 0$
is real. If $b=0$ then $\nabla_xw=\cB_*w=0$, which yields $\lambda(h,w)=(0,0)$, hence $\lambda=0$, as we assumed $(w,h)$ to be nontrivial. Therefore, the eigenvalues of $L_i$ and $L_{ii}$ are real.  Furthermore, if $a\geq0$, then $\lambda $ must be negative, which implies linear stability.

So let us investigate when this happens. Clearly it is true if $[\![\varrho_*]\!]\leq0$, so assume {$[\![\varrho_*]\!]>0$.} We then use the decomposition 
$h=h_0 + {\sf h}$, where the mean of $h_0$ equals zero, i.e.
$${\sf h} = |G|^{-1} \int_G h \,dx$$
is the mean of $h$. The identity
$$ \int_G h p_*\overline{\cA_\Sigma h} \,dx = \sigma(-\Delta_N h_0|h_0)_{L_2(G)} - \gamma[\![\varrho_*]\!]( |h_0|_{L_2(G)}^2+|G|{\sf h}^2)$$
shows that this term is nonnegative if ${\sf h}=0$ and $\sigma \mu_1 \geq\gamma[\![\varrho_*]\!]$, where $\mu_1>0$ means the first nontrivial eigenvalue of the negative Neumann-Laplacian $-\Delta_N$ on $G$.  If ${\sf h}\neq0$ then we must take into account the remaining term in the definition of $a$ involving $w$. We may rewrite $a$ in the following form
\begin{align}\label{evseq1}
a&=[ \int_\Omega |w|^2 n_* \,d(x,y) \!- \gamma[\![\varrho_*]\!]|G|{\sf h}^2] + [-\sigma(\Delta_N h_0|h_0)_{L_2(G)} \!- \gamma[\![\varrho_*]\!] |h_0|_{L_2(G)}^2].
\end{align}
We observe that integrating the eigenvalue equation for $w$ and using the divergence theorem we get for $\lambda\neq 0$
$$ \int_\Omega w\, m_* \,d(x,y) = [\![\varrho_*]\!] \int_G h \,dx = [\![\varrho_*]\!]|G| {\sf h}.$$
On the other hand, the Cauchy-Schwarz inequality yields
$$(\int_\Omega  w\,m_* \,d(x,y))^2= (\int_\Omega \frac{m_*}{\sqrt{n_*}} \sqrt{n_*}w \,d(x,y))^2 
\leq \int_\Omega \frac{m_*^2}{n_*} \,d(x,y) \int_\Omega |w|^2 n_* \,d(x,y),$$
with equality if and only if $\sqrt{n_*}w= \alpha m_* /\sqrt{n_*}$ for some $\alpha\in\CC$, which means $w= \alpha\varrho_*/p_*$, i.e.\ $w\in {\sf N}(L_{ii})$.
Thus, the first term in \eqref{evseq1} will be  nonnegative for all $w$ if and only if
$$\gamma c =\gamma \int_{-\underline{h}}^{\overline{h}} \varrho^\prime_*\varrho_* \,dy 
\leq [\![\varrho_*]\!],\quad \mbox{ or equivalently }
\varrho_*(-\underline{h})\leq \varrho_*(\overline{h}) , $$
by \eqref{lcunii}.
If these two conditions hold, then $-L_{ii}$ does not admit positive eigenvalues.

In case {\bf (i)} we proceed in a similar way. Integrating
the eigenvalue equation over the domains $\Omega_j$, we get for $\lambda\neq0$
$$ \int_{\Omega_1} m_* w \,d(x,y) = -|G| {\sf h} \varrho_*(0-),\quad \int_{\Omega_2} m_* w \,d(x,y) = |G| {\sf h} \varrho_*(0+),$$
which by the Cauchy-Schwarz inequality yields
$$ \int_\Omega n_*|w|^2 \,d(x,y) \geq |G| {\sf h}^2 \Big( \frac{\varrho^2_*(0-)}{c_1} + \frac{\varrho^2_*(0+)}{c_2}\Big ),$$
hence by \eqref{lcuni} the first term in \eqref{evseq1} is always nonnegative in case {\bf (i)}.

\bigskip

\subsection{Eigenvalue 0}\label{subsec:eigzero}
For $\lambda=0$ the eigenvalue identity \eqref{evseq} yields
$$\nabla_x w=0,\quad \partial_y w+ \gamma(\varrho^\prime_*-\varrho_*/p_*) w=0\quad \mbox{in } \Omega\setminus\Sigma,$$
hence $w=w(y)$ and as $\varrho_*/p_*$ satisfies these equations we obtain $w(y) = \alpha \varrho_*(y)/p_*(y)$, where $\alpha$ is constant in the phases.
So there are two free parameters for $\alpha$. On the other hand, on $\Sigma$
\begin{equation*}
\begin{aligned}
\sigma \Delta_x h +\gamma[\![\varrho_*]\!] h & = p_*[\![w]\!] = [\![\alpha\varrho_*]\!]  && \mbox{in }\; G,\\
\partial_{\nu_G} h &=0 && \mbox{on }\; \partial G.
\end{aligned}
\end{equation*}
Decomposing $h=h_0 +{\sf h}$ with $\int_G h_0 \,dx=0$ as before, we obtain by taking the mean
$$\gamma[\![\varrho_*]\!] {\sf h} = [\![\alpha \varrho_*]\!],$$
which determines ${\sf h}$ uniquely in the non-degenerate case $[\![\varrho_*]\!]\neq 0$. On the other hand, $h_0$ must satisfy
$$ -\Delta_N h_0 = (\gamma[\![\varrho_*]\!]/\sigma) h_0,$$
hence the number $\mu_*:= \gamma[\![\varrho_*]\!]/\sigma$ is an eigenvalue of the negative Neumann-Laplacian $-\Delta_N$ on $G$, if $h_0$ is nontrivial.

This shows that besides the exceptional case $\mu_*\in\sigma(-\Delta_N)$, the kernel of $L_i$ is two-dimensional. If phase transition is present, we have the additional constraint $[\![w/\varrho_*]\!]=0$, which yields $\alpha$ constant for all $y$, thereby reducing the dimension
of the kernel of $L_{ii}$ to one. In particular, in both cases the dimension of the kernel of the linearization equals the dimension of the tangent space of the manifold of equilibria at the given flat equilibrium $(p_*,\Sigma)$.

Next we  prove that $0$ is not an eigenvalue of $L_\l$, if $\mu_*\not\in \sigma(-\Delta_N)$ and conservation of mass is taken into account. In fact, it is not difficult to show that for the mass functionals $M_j$ we have
$$ \langle M^\prime_2(p_*,0)|(w,h)\rangle = \int_G \int_0^{\overline{h}} m_*w \,dydx -\varrho_*(0+) |G|{\sf h},$$
and
$$ \langle M^\prime_1(p_*,0)|(w,h)\rangle = \int_G \int_{-\underline{h}}^0 m_*w \,dydx +\varrho_*(0-) |G|{\sf h},$$
hence for $M=M_1+M_2$
$$  \langle M^\prime(p_*,0)|(w,h)\rangle = \int_\Omega  m_*w \,d(x,y) -[\![\varrho_*]\!] |G|{\sf h}.$$
So in case {\bf (ii)} we obtain from conservation of total mass
$$\int_\Omega  m_*w \,d(x,y) -[\![\varrho_*]\!] |G|{\sf h}=  \langle M^\prime(p_*,0)|(w,h)\rangle=0$$
as a side condition. Inserting the generic element of the kernel of $L_{ii}$, i.e.\ $ w = \alpha \varrho_*/p_*$ and $\alpha = \gamma{\sf h}$, this constraint yields
$$0= \alpha\int_\Omega m_*\varrho_*/p_* \,d(x,y)-  [\![\varrho_*]\!] |G|{\sf h}= (\alpha |G|/\gamma)( \gamma c -   [\![\varrho_*]\!]),$$
which implies $\alpha=0$, i.e.\ ${\sf N}(L_{ii})=0$, provided  $\gamma c \neq   [\![\varrho_*]\!]$.

On the other hand, in case {\bf (i)} we obtain with conservation of the masses of the phases in a similar way
$$ \alpha_1 c_1 +\varrho_*(0-) {\sf h} =  \alpha_2 c_2 -\varrho_*(0+) {\sf h} = 0,$$
hence by
$$\gamma[\![\varrho_*]\!]{\sf h} = [\![\alpha \varrho_*]\!]= \left(\frac{\varrho^2_*(0-)}{c_1}+\frac{\varrho^2_*(0+)}{c_2} \right){\sf h},$$
which implies ${\sf h}=0$ by \eqref{lcuni}, and then also $\alpha_1=\alpha_2=0$. This proves ${\sf N}(L_i)=0$ in the presence of conservation of masses.

\bigskip

\subsection{Semi-Simplicity of the Eigenvalue 0}
Suppose $(w,h)\in {\sf N}(L_\l)$ and $(w,h)=L_\l(w_1,h_\l)$ for $\l\in\{i,ii\}$. This means in both cases $\cA_* w_1=m_*w$, and $ \cB_*w_1=h$ in case {\bf (i)}, resp.\
$[\![\varrho_*\cB_* w_1]\!] =[\![\varrho_*]\!] h$ in case {\bf(ii)}.
 Multiplying the equation for $w_1$ by $wp_*/\varrho_*$, {integrating by parts  (and using the relation $w(y) p_*(y)/\varrho_*(y)\equiv\alpha$, see Subsection \ref{subsec:eigzero}),} 
 this yields by the boundary conditions on $\partial\Omega$ and the interface conditions on $\Sigma$
\begin{align*}
\int_\Omega |w|^2n_* \,d(x,y)&= \int_\Omega (\cA_* w_1) wp_*/\varrho_* \,d(x,y)\\
&{= \int_G p_*[\![\cB_*w_1w]\!] \,dx= -\int_G p_*\cA_\Sigma h h \,dx 
= \gamma[\![\varrho_*]\!]|G| {\sf h}^2.}
\end{align*}
{Beim letzten Gleichheitszeichen geht $h_0=0$ ein?  Somit sollte in (d)  $\mu_*\notin \sigma(-\Delta_N)$ vorausgesetzt werden.}\\
Inserting $w=\alpha\varrho_*/p_*$ in case {\bf(ii)} this yields by the relation $\alpha= \gamma{\sf h}$ from the previous subsection
$$ \alpha^2 c|G| = \gamma[\![\varrho_*]\!]|G| {\sf h}^2= ([\![\varrho_*]\!]/\gamma)|G|\alpha^2.$$
Hence $\alpha=0$ and so $(w,h)=(0,0)$ in case $[\![\varrho_*]\!]\neq \gamma c$.
On the other hand, in case {\bf (i)}, integrating the equation for $w_1$ over $\Omega_j$ yields
\begin{align*}
\alpha_1c_1= -\rho_*(0-) {\sf h},\quad \alpha_2 c_2 = \rho_*(0_+) {\sf h},
\end{align*}
and so in this case we get
$$ \gamma[\![\varrho_*]\!]|G| {\sf h}^2=\int_\Omega |w|^2n_* \,d(x,y)= |G|(\alpha_1^2c_1 + \alpha_2^2 \alpha_2)
=|G|{\sf h}^2 \left(\frac{\varrho^2_*(0-)}{c_1}+\frac{\varrho^2_*(0+)}{c_2} \right),$$
and so $({\sf h},\alpha,w)=(0,0,0)$ by \eqref{lcuni}. This shows that $0$ is always semi-simple in case {\bf(i)}, and if in addition
$\varrho_*(-\underline{h})\neq\varrho_*(\overline{h})$ in case {\bf (ii)}, thanks to \eqref{lcunii}.
 Recall that we always assume that $\varrho_*$ is non-degenerate, i.e.\ $[\![\varrho_*]\!]\neq0$, if phase transition is admitted.

\bigskip

\subsection{Positive Eigenvalues}
We shall investigate the existence of positive eigenvalues of {$-L_\l$,} $\l\in\{i,ii\}$. 
This will be achieved by reduction to an eigenvalue problem for $h$ in $L_2(G)$ by a suitable Neumann-to-Dirichlet operator. 
We will need several steps.\vspace{3mm}\\
{\bf (a)} First we introduce operators $A_i$ and $A_{ii}$ in $L_2(\Omega\setminus\Sigma)$ as follows.
$$ A_\l w := \cA_*w/m_*,\quad w\in {\sf D}(A_\l),\; \l\in\{i,ii\},  $$
$$ {\sf D}(A_\l)= \{w\in H^2_2(\Omega\setminus\Sigma): \, {\bf (bck)}\; \mbox{holds}\},\quad \l\in\{i,ii\},$$
where
\begin{equation*}
{\bf (bci)}\qquad
\left.
\begin{aligned}
\cB_*w&= 0 \mbox{ on }  G\times\{-\underline{h},\overline{h}\}, \quad& \partial_{\nu_G} w&=0 \mbox{ on } \partial G\times\{(-\underline{h}, \overline{h})\setminus\{0\}\},\\
 {\cB_* w}&=0 \mbox{ on } \Sigma,  \quad &
\end{aligned}
\right.
\end{equation*}
and
\begin{equation*}
{\bf (bcii)}\quad
\left.
\begin{aligned}
\cB_*w&= 0 \mbox{ on }  G\times\{-\underline{h},\overline{h}\},\quad &\partial_{\nu_G} w&=0 \mbox{ on } \partial G\times\{(-\underline{h}, \overline{h})\setminus\{0\}\},\\
 [\![ w/\varrho_*]\!]&=0 \mbox{ on } \Sigma,\quad &[\![\varrho_*\cB_* w]\!] &=0 \mbox{ on } \Sigma.
\end{aligned}
\right.
\end{equation*}
As an inner product in $Y:=L_2(\Omega)$ we employ
$$ (w_1|w_2)_Y:= \int_\Omega w_1\overline{w}_2 n_* \,dx,$$
which is equivalent to the usual one, as $n_*>0$ is bounded from above and away from $0$.
It is not difficult to prove that $A_\l$, $\l\in\{i,ii\}$, are self-adjoint w.r.t.\ this inner product and positive semi-definite. 
In fact, by an integration by parts we have
$$(A_\l w|w)_Y = \int_G[ kp_*^2|\nabla_x w|^2 + |\cB_* w|^2/k] \,dx\geq0.$$
As $G$ is bounded and has $C^2$-boundary, these operators have compact resolvents, and hence their spectra consist only of nonnegative eigenvalues of finite algebraic multiplicity which are also semi-simple. In particular, we have the orthogonal decomposition
$ Y= {\sf N}(A_\l)\oplus {\sf R}(A_\l)$, with associated orthogonal projections $P_\l$ onto ${\sf N}(A_\l)$. The kernels of these operators are easily identified:
$${\sf N}(A_i)={\rm span}\{ \chi_j\varrho_*/p_*:j=1,2\},\quad {\sf N}(A_{ii})={\rm span}\{\varrho_*/p_*\},$$
with $\chi_j$ the characteristic functions of $[-\underline{h},0]$ for $j=1$ and of $[0,\overline{h}]$ for $j=2$. Then the projections $P_\l$ are given by
$$ P_iw = \frac{1}{|G|c_1}(\chi_1\varrho_*/p_*|w)_Y \, \chi_1\varrho_*/p_* +  \frac{1}{|G|c_2}(\chi_2\varrho_*/p_*|w)_Y\,\chi_2\varrho_*/p_*,$$
and
$$ P_{ii} w= \frac{1}{|G| c}(\varrho_*/p_*|w)_Y \, \varrho_*/p_*.$$

\noindent
{\bf (b)} Next, for $\lambda>0$ we define the relevant Neumann-to-Dirichlet operators by means of $T_\lambda g :=[\![w]\!]$, where $w$ solves the problem
\begin{align}\label{wlambda}
m_*\lambda w +\cA_* w&=0 \quad \mbox{in } \Omega\setminus\Sigma,\nn\\
\cB_* w &=0 \quad \mbox{on } G\times \{-\underline{h},\overline{h}\},\\
\partial_{\nu_G}w&=0 \quad \mbox{on }\partial G \times \{(-\underline{h}, \overline{h})\setminus\{0\}\},\nn
\end{align}
supplemented by
$$ [\![\cB_* w]\!]=0,\quad -\cB_* w =g \quad \mbox{in } G$$
in case {\bf (i)} and by
$$ [\![w/\varrho_*]\!]=0,\quad -[\![ \rho_* \cB_*w]\!]=[\![\varrho_*]\!]g \quad \mbox{in } G$$
in case {\bf (ii)}. \\
Then with $B_\lambda =\lambda T_\lambda + \cA_\Sigma$, with ${\sf D}(B_\lambda)={\sf D}(\Delta_N)$,
$\lambda>0$ is an eigenvalue of {$-L_\l$} if and only if $B_\lambda h=0$ for some $h\neq0$.  This reduces the eigenvalue problem for the operators $L_\l$ to a problem for the selfadjoint operators $B_\lambda$ on $Z=L_2(G)$.

It is not difficult to show that $T_\lambda$ is in both cases selfadjoint in $Z$, and by an integration by parts it is easy to verify the following crucial identity, showing in particular that $T_\lambda$ is positive semi-definite, for each $\lambda>0$.
\begin{equation}\label{Tlambda}
(T_\lambda g|g)_Z = {\frac{1}{p_b} }\left[ \lambda \int_\Omega |w|^2 n_* \,d(x,y) + \int_\Omega [kp_*^2|\nabla_x w|^2 + |\cB_*w|^2/k] \,d(x,y)\right].
\end{equation}
This identity shows that in both cases $T_\lambda$ is injective for $\lambda>0$, and that its inverse is positive definite in $Y$.

Below we will show that $B_\lambda$ is positive definite for large enough $\lambda$, and that it admits negative eigenvalues for small $\lambda$, if the the instability conditions $\mu_*>\mu_1$, or in case {\bf (ii)} in addition
$\varrho_*(-\underline{h})<\varrho_*(\overline{h})$, are valid. Then as $\lambda$ varies from $0$ to $\infty$ at least one eigenvalue of $B_\lambda$ must cross the imaginary axis through zero, thereby inducing at least one positive eigenvalue of
{$-L_\l$} as claimed. We can even determine  the {\em Morse-index} $m_\l$ of $L_\l$, i.e.\ the dimension of the unstable eigenspace of $L_\l$:
 we have
$$m_1 = \sum_{\mu_l<\mu_*} {\rm dim}\,{\sf N}(\Delta_N+\mu_l),$$
and $m_2=m_1$ if   $\varrho_*(-\underline{h})>\varrho_*(\overline{h})$, and $m_2=m_1+1$ if $\varrho_*(-\underline{h})<\varrho_*(\overline{h})$.
Here we assume, as before, $\mu_*\not\in \sigma(-\Delta_N)$ as well as  $[\![\varrho_*]\!]\neq 0$ and $\varrho_*(-\underline{h})\neq\varrho_*(\overline{h})$ for the case {\bf (ii)}.

\bigskip

\noindent
{\bf (c)} {\em Small $\lambda$}\\
We assume w.l.o.g.\ $[\![\varrho_*]\!]>0$. In this step we determine
$$ B_0 := \lim_{\lambda\to0+} B_\lambda = \lim_{\lambda \to 0+} \lambda T_\lambda + \cA_\Sigma$$
in the strong sense. Let $w_\lambda$ denote the solution of \eqref{wlambda} with $g=h$ and $w_1$ that of \eqref{wlambda} with $\lambda=\lambda_1$, 
{$\lambda_1>0$ a fixed number,}
and $g=h$.
Then $w_0=w_\lambda-w_1$ has homogeneous boundary conditions, i.e\ we have $(\lambda+A_\l)w_0 = (\lambda_1- \lambda) w_1$, which yields
$$ \lambda w_\lambda = \lambda w_0 +\lambda w_1 = \lambda(\lambda +A_\l)^{-1} (\lambda_1-\lambda)w_1 +\lambda w_1\to \lambda_1 P_\l w_1,$$
as $\lambda\to0+$. By {\bf (a)} we have
\begin{align*}
\lambda_1 P_{ii}w_1 &= \frac{\varrho_*}{p_*c|G|}\lambda_1\int_\Omega (\varrho_*/p_*) w_1 n_* \,d(x,y) 
= \frac{\varrho_*}{p_*c|G|}\int_\Omega \lambda_1 w_1 m_* \,d(x,y)\\
&= -\frac{\varrho_*}{p_*c|G|}\int_\Omega \cA_* w_1 \,d(x,y) = -\frac{\varrho_*}{p_*c|G|}\int_G [\![\varrho_*\cB_* w_1]\!] \,dx \\
&= \frac{\varrho_*}{p_*c} [\![\varrho_*]\!] {\sf h},
\end{align*}
where, as before, ${\sf h}$ denotes the mean value of $h$. This implies
$$ \lambda T_\lambda h = \lambda[\![w_\lambda]\!]\to [\![\lambda_1 P_{ii} w_1]\!] = \frac{[\![\varrho_*]\!]^2}{ p_b c}{\sf h},$$
hence with $h_0=h-{\sf h}$
$$ B_0 h = \lim_{\lambda\t0+} B_\lambda h = -\frac{\sigma}{p_b} \Big( \Delta_N +\mu_*\Big)h_0 + 
\frac{1}{p_b c}\Big(([\![\varrho_*]\!]-\gamma c)[\![\varrho_*]\!]\Big) {\sf h}.$$
This shows that $B_0$ has negative eigenvalues if and only if $\mu_1<\mu_*$, or $[\![\varrho_*]\!]<\gamma c$ which by \eqref{lcunii} is equivalent to
$\varrho_*(-\underline{h}) > \varrho_*(\overline{h})$.

On the other hand, in case {\bf (i)} we have similarly
$$\lambda_1 P_i w_1 = \left(-\frac{\varrho_*\varrho_*(0-)\chi_1}{c_1 p_b} +\frac{\varrho_*\varrho_*(0+)\chi_2}{c_2 p_b}\right) {\sf h},$$
and so
$$B_0 h = -\frac{\sigma}{p_b} \Big(\Delta_N + \mu_*\Big) h_0 + 
\frac{1}{p_b}\Big( \frac{\varrho^2_*(0-)}{c_1}+ \frac{\varrho^2_*(0+)}{c_2}-\gamma[\![\varrho_*]\!]\Big){\sf h}.$$
In virtue of \eqref{lcuni}, the second term is nonnegative, hence we see that $B_0$ has negative eigenvalues if and only if $\mu_*>\mu_1$.

\bigskip

\noindent
{\bf (d)} {\em Large $\lambda$}\\
We claim that there are $\lambda_0>0$ and $\eta>0$ such that
\begin{equation}\label{largelambda}
(B_\lambda h|h)_Z \geq \eta |h|_Z^2,\quad {\lambda\ge \lambda_0,}\quad h\in {\sf D}(-\Delta_N).
\end{equation}
To prove this  let us assume the contrary. Then there are sequences $\lambda_n\to\infty$, $h_n\in {\sf D}(-\Delta_N)$, $|h_n|_Z=1$, such that
$$(B_{\lambda_n} h_n|h_n)_Z = \lambda_n(T_{\lambda_n} h_n|h_n)_Z + (\cA_\Sigma h_n|h_n)_Z\leq 1/n.$$
With \eqref{Tlambda} this shows for the solution $w_n=w_{\lambda_n}$ of \eqref{wlambda} for $\lambda=\lambda_n$
$$ \lambda_n |w_n|_{L_2(\Omega)} +\sqrt{\lambda_n} |\nabla w_n|_{L_2(\Omega)} 
+ |h_n|_{H^1_2(G)} \leq C,\quad n\geq 1,$$
hence there is $w_\infty\in {L_2(\Omega)}$ such that $\lambda_n w_n\to w_\infty$  
weakly in ${L_2(\Omega)}$, up to a subsequence.
Then for a test function $\phi\in \cD(\Omega\setminus\Sigma)$ we have
$$\lambda_n \langle w_n|\phi \rangle = -\langle\cA_*w_n|\phi \rangle = -\langle w_n|\cA_*\phi \rangle
=-(1/\lambda_n)\langle \lambda_n w_n|\cA_*\phi \rangle \to 0 $$
as $n\to \infty$, hence $w_\infty=0$. Let $P$ denote the orthogonal projection onto the unstable part of $\cA_\Sigma$, i.e.\ onto the space spanned by the eigenspaces of all nonpositive eigenvalues of $\cA_\Sigma$, and let $Q=I-P$ be its complementary projection.
As $\cA_\Sigma$ is selfadjoint, there is an orthonormal basis $\{a_k\}_{k=1}^N$ of eigenvectors of $A_\Sigma$ spanning ${\sf R}(P)$. We extend each function $a_k$ constantly in $y$ to $\Omega$. Then we obtain in case {\bf (ii)} by an integration by parts
\begin{align*}
[\![\varrho_*]\!] (h_n|a_k)_Z &= (-[\![\varrho_* \cB_* w_n]\!]|a_k)_Z
= \int_{\Omega\setminus\Sigma} {\rm div}(\varrho_*(kp_*\nabla_x w_n + \cB_*w_n{\sf e}_d)a_k) \,d(x,y)\\
&= -\langle \cA_*w_n |a_k\rangle + \int_\Omega k\varrho_*p_*\nabla_x w_n \cdot\nabla_x a_k \,d(x,y)\\
&= \langle\lambda_n w_n| a_k\rangle  + \int_\Omega k\varrho_*p_*\nabla_x w_n \cdot\nabla_x a_k \,d(x,y)\to 0
\end{align*}
as $n\to\infty$ for each $k=1,\ldots,N$, since $\lambda_n w_n\to0$ weakly and $\sqrt{\lambda_n}\nabla w_n$ is bounded  in $L_2(\Omega)$.
With $[\![\varrho_*]\!]\neq0$ this shows $Ph_n\to0$ as $n\to\infty$, and so also $Qh_n\to 0$ as $\cA_\Sigma$ is positive definite on ${\sf R}(Q)$. This leads to a contradiction to $|h_n|_Z=1$, thereby proving the claim.

In case {\bf (i)} the arguments are similar, hence we omit details here. Summarizing, we have proved all assertions of  Theorem 4.1.

\bigskip

\noindent
\subsection{Nonlinear Stability of Flat Equilibria}

\noindent
Let {$\cE$} be the set of (non-degenerate) flat equilibria, and fix some equilibrium {$e_*=(\pi_*,\Gamma_*)\in\cE$}.
Employing the findings from the previous section, we have

\begin{itemize}
\item {$e_*$} is {\bf normally stable} if  $\mu_*<\mu_1$ in case {\bf (i)}, and in case {\bf (ii)} if additionally 
$[\![\varrho_*]\!](\varrho_*(\overline{h})-\varrho_*(-\underline{h}))>0$.
\vspace{1mm}
\item {$e_*$} is {\bf normally hyperbolic} if $\mu_1<\mu_*\not\in \sigma(-\Delta_N)$ in both cases, and in  case {\bf (ii)} also if
$\mu_*<\mu_1$ and $[\![\varrho_*]\!](\varrho_*(\overline{h})-\varrho_*(-\underline{h}))<0$.
\end{itemize}

\noindent
Therefore, the {generalized principle of linearized stability}  due to Pr\"uss, Simonett, Zacher \cite{PSZ09}  yields  our
main result on stability of equilibria, i.e.\ on the {\em Rayleigh-Taylor instability} for the Verigin problem in a finite capillary.


\goodbreak
\noindent
\begin{thm} Let $p>d+2$ and {$e_*\in\cE$} be a {non-degenerate} flat equilibrium such that 
$$\mu_*:=  \gamma[\![\varrho_*]\!]/\sigma \not\in \sigma(-\Delta_N)$$ and in case {\bf (ii)} in addition
$[\![\varrho_*]\!](\varrho_*(\overline{h})-\varrho_*(-\underline{h}))\neq0$.
Then
\begin{itemize}
\item[{\bf (i)}] If {$e_*$}  is normally stable, it is nonlinearly stable, and any solution starting near {$e_*$}
is global and converges to another equilibrium {$e_\infty\in\cE$} exponentially fast.
\vspace{1mm}
\item[{\bf (ii)}]
If {$e_*$} is normally hyperbolic, then {$e_*$} is nonlinearly unstable. Any solution starting in a neighborhood of {$e_*$}
{and staying near} {$e_*$} exists globally and converges to an equilibrium {$e_\infty\in\cE$} exponentially fast.
\end{itemize}
\end{thm}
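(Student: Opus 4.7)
The proof is an application of the generalized principle of linearized stability of Pr\"uss--Simonett--Zacher \cite{PSZ09}, in the two-phase form developed in Chapters 5 and 9--11 of \cite{PrSi16}. All structural ingredients have been assembled in the preceding sections. The Hanzawa-type transformation of Subsection 3.1 casts \eqref{p1} respectively \eqref{p2} as a quasilinear parabolic problem on the fixed domain $\Omega\setminus\Sigma$; Theorem \ref{MR-PL} provides maximal $L_p$-regularity for the principal linearization with boundary contact, and the remarks following it yield local well-posedness of the nonlinear problem on the state manifold $\cSM_\l$. Moreover, the full linearization at $e_*$ is a lower-order perturbation of the principal one and therefore also enjoys maximal $L_p$-regularity.

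I would then verify the two hypotheses of the abstract principle. \emph{Manifold of equilibria.} By Subsection 2(b), the set $\cE$ of non-degenerate flat equilibria is a $C^2$-submanifold of $\cSM_\l$ of dimension $2$ in case \textbf{(i)} and $1$ in case \textbf{(ii)}, thanks to the non-degeneracy $[\![\varrho_*]\!]\neq 0$. The explicit computation of ${\sf N}(L_\l)$ in Subsection 4.3, under $\mu_*\not\in \sigma(-\Delta_N)$, shows $\dim{\sf N}(L_\l) = \dim T_{e_*}\cE$. Since tangent vectors to $\cE$ at $e_*$ are automatically annihilated by $L_\l$ (differentiating the equilibrium equations), the equality of dimensions forces $T_{e_*}\cE = {\sf N}(L_\l)$. \emph{Spectrum.} Theorem \ref{linearstab}(a) shows $\sigma(L_\l)\subset\RR$. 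Compactness of the resolvent, together with the semi-simplicity of $0$ from Subsection 4.4 and the localization of positive eigenvalues of $-L_\l$ via the Neumann-to-Dirichlet reduction in Subsection 4.5, imply that in the normally stable regime $\sigma(L_\l)\subset (0,\infty)\cup\{0\}$ with $0$ semi-simple of multiplicity $\dim T_{e_*}\cE$, while in the normally hyperbolic regime there are in addition exactly $m_\l$ negative eigenvalues of $L_\l$, counted with multiplicity, bounded away from zero.

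These are precisely the conditions of normal stability, respectively normal hyperbolicity, in the sense of \cite[Chapter 5]{PrSi16}. Applying \cite{PSZ09} in the state manifold $\cSM_\l$ therefore gives, in case \textbf{(i)} of the theorem, that every solution starting sufficiently close to $e_*$ exists globally, remains in a neighborhood of $\cE$, and converges to some $e_\infty\in\cE$ at an exponential rate; and in case \textbf{(ii)}, that $e_*$ admits in $\cSM_\l$ local center-stable and unstable $C^1$-manifolds through $e_*$, the latter of dimension $m_\l$. In particular $e_*$ is Lyapunov unstable, but any solution that remains in a small neighborhood of $e_*$ for all positive times lies on the center-stable manifold and hence converges exponentially to some $e_\infty\in\cE$.

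The main technical obstacle is not the dynamical systems step -- which is purely abstract once the spectral and manifold hypotheses have been verified -- but the reconciliation of our concrete geometry with the abstract framework. Specifically, the angle condition $\nu_G\cdot\nu_\Gamma=0$ at $\partial G\cap\partial\Gamma$ produces corner singularities that prevent a direct appeal to \cite[Section 6.7]{PrSi16}; this is the role of the reflection principle used in Step 5 of the proof of Theorem \ref{MR-PL}, which both delivers maximal $L_p$-regularity and ensures that the nonlinear constraint set $\cSM_\l$ is a genuine $C^1$-submanifold of the trace space, so that the semiflow and its linearization fit the abstract framework without modification. Once this regularity is available, the rest of the argument is a direct quotation of \cite{PSZ09} and \cite[Chapter 5]{PrSi16}.
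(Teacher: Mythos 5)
Your proposal is correct and takes essentially the same route as the paper: verify normal stability resp.\ normal hyperbolicity of $L_i$, $L_{ii}$ through the spectral analysis of Section 4 (real spectrum, semi-simple zero eigenvalue, kernel equal to the tangent space of $\cE$, location of the unstable eigenvalues), and then invoke the generalized principle of linearized stability \cite{PSZ09} in the state-manifold framework of \cite[Chapters 10--11]{PrSi16}. The one step the paper carries out explicitly and you only assert is the local parameterization of the nonlinear state manifold $\cSM^i_*$ (resp.\ $\cSM^{ii}_*$) near $e_*$ over the linear space $\cSX^i_*$, obtained via the implicit function theorem applied to an auxiliary nonlinear elliptic transmission problem after rewriting the problem so that all nonlinearities and their derivatives vanish at the equilibrium; it is this parameterization, rather than the reflection principle itself (which only enters through maximal regularity and the solvability of the linear elliptic problem), that makes the state manifold a $C^1$-manifold to which the abstract results of \cite[Sections 11.2.4 and 11.3]{PrSi16} apply.
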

\begin{proof}
The proof parallels that for the Stefan problem with surface tension given in  \cite[Chapter 11]{PrSi16},
see also  Pr\"uss, Simonett and Zacher \cite{PSZ13}.

In the following, we shall outline  the strategy of the proof, without providing all the technical details.
We first note that the nonlinear problems \eqref{p1} and \eqref{p2} are equivalent to
\begin{equation}\label{nonlinear-i}
\begin{aligned}
m_* \partial_t  w +\cA_*  w &=F_w(w,h) && \mbox{in }\; \Omega\setminus\Sigma, \\
\cB_*  w &=G_b(w,h)  && \mbox{on }\; G\times \{-\underline{h},{h}\},\\
\partial_{\nu_G}  w&=0 && \mbox{on }\; \partial G \times \{(-\underline{h},{h})\setminus\{0\}\}, \\
[\![ w]\!] +\cA_\Sigma  h &=G_h(w,h)  && \mbox{in }\; G,\\
\partial_{\nu_G}  h &=0  && \mbox{on }\; \partial G, \\
[\![\cB_* w]\!] &= [\![G^i_w (w,h) ]\!]    &&\mbox{on }\; \Sigma,\\
\partial_t h + \cB_*  w &= F^i_h(w,h)  && \mbox{on }\; \Sigma ,
\end{aligned}
\end{equation}
and
\begin{equation}\label{nonlinear-ii}
\begin{aligned}
m_* \partial_t  w +\cA_*  w &=F_w(w,h) && \mbox{in }\; \Omega\setminus\Sigma, \\
\cB_*  w &=G_b(w,h)  && \mbox{on }\; G\times \{-\underline{h},{h}\},\\
\partial_{\nu_G}  w&=0 && \mbox{on }\; \partial G \times \{(-\underline{h},{h})\setminus\{0\}\}, \\
[\![ w]\!] +\cA_\Sigma  h &=G_h(w,h)  && \mbox{in }\; G,\\
\partial_{\nu_G}  h &=0  && \mbox{on }\; \partial G, \\
[\![w/\varrho_*]\!] &= [\![ G^{ii}_w (w,h) ]\!]     &&\mbox{on }\; \Sigma,\\
[\![\varrho_*]\!] \partial_t  h + [\![ \varrho_* \cB_*  w ]\!] &= F^{ii}_h(w,h)  && \mbox{on }\; \Sigma, 
\end{aligned}
\end{equation}
respectively.
Using the notation of \eqref{AA} and  \eqref{Functions} (with $\pi=p_*$, $v=1+w$),
we have for the nonlinear terms
\begin{equation*}
\begin{aligned}
F_w(w,h) &= \cF(1+w,h) -\cA(1+w, h)w +\cA_* w + (m_* - m(1+w, h )) \partial _t w, \\
G_b(w,h) &= k [ (p^\prime_* + \gamma   p_* \varrho^\prime_*) w -  p^\prime_*(1+w)  -\gamma\varrho(p_*(1+w))], \\
G_h(w,h) &= -(1/p_b)\big[  [\![ (p_*\circ \theta_h) (1+w) ]\!]
- \sigma\, {\rm div}\, (\beta(h)\nabla_x h)  \big]  + \cA_\Sigma h, \\
G^i_w(w,h) &=  \cG (1+w,h)-B(h)w  +\cB_*w,    \\
F^i_h (w,h) &= \cG (1+w,h)-B(h)w  +\cB_*w 
\end{aligned}
\end{equation*}
in case {\bf (i)}, and 
\begin{equation*}
\begin{aligned}
G^{ii}_w(w,h) &= \varphi(\varrho((p_*\circ \theta)(1+w))- w/\varrho_*   ,\\
F^{ii}_h (w,h) &=[\![\varrho((p_*\circ\theta)(1+w)) (\cG (1+w,h)-\cB(h)w +\varrho_* \cB_* w )]\!] \\
  & +  [\![ \varrho_*  -\varrho((p_*\circ\theta)(1+w))]\!]\partial_t h 
\end{aligned}
\end{equation*}
in case {\bf (ii)}.
Note that the function $F_w$ contains the terms $\partial_t h$ and $\partial_t w$, 
while $F^{ii}_h$ contains the term $\partial_t h$. For these we may use the substitutions\\
\begin{equation*}
\begin{aligned}
&\partial _t w= \frac{1}{m(1+w,h)} \big( \cF(1+w,h) -\cA (1+w,h)w \big), \\
&\partial_t h=  \cG(1+w,h) - \cB(h) w   \quad\text{in case {\bf (i)}},\\
&\partial_t h= \frac{1}{[\![\varrho((p_*\circ\theta)(1+w)) ]\!]} [\![\varrho((p_*\circ\theta)(1+w)) (\cG (1+w,h)-\cB(h)w)]\!]\quad\text{in case {\bf (ii)}}. 
\end{aligned}
\end{equation*}
One readily verifies that 
\begin{equation}\label{zero-der}
\begin{aligned}
&(F_w(1,0), F^\prime_w(1,0)) =(0,0),                               &&(G_l (1,0), G^\prime_l(1,0)) = (0,0),   \\
 &(G^{j}_w(1,0), (G^{j}_w)^\prime (1,0)) =(0,0),   &&(F^{j}_h (1,0), (F^{j}_h)^\prime (1,0))  = (0,0), 
\end{aligned}
\end{equation}
where $l\in\{b,h\}$ and  ${j}\in \{i,ii\}$.
The state manifolds near the equilibrium $(p_*,\Sigma)$ can then be described by
\begin{equation*}
\cSM_*^{i} =\big\{(w,h)\in W^{2-2/p}_p(\Omega\setminus \Gamma)\times W^{4-3/p}_p(G):  {\bf (N B Ci)}\text{  holds}\big\}
\end{equation*}
in case {\bf (i)}, with {\bf (NBCi)} given by
\begin{equation*}
\left.
\begin{aligned}
\cB_*w&= G_b(w,h) \mbox{ on }  G\times\{-\underline{h},\overline{h}\},
\quad &\partial_{\nu_G} w&=0 \mbox{ on } \partial G\times\{(-\underline{h}, \overline{h})\setminus\{0\}\}\\
 [\![w]\!] +\cA_\Sigma h &=G_h(w,h)  \mbox{ in }  G,
\quad &\partial_{\nu_G} h&=0 \mbox{ on } \partial G, \\
[\![\cB_* w]\!]&=[\![G^i_w(w,h)]\!] \mbox{ on } \Sigma,\quad &\cB_* w &- F^i_h(w,h)\in W^{2-6/p}_p(\Sigma). \\
\end{aligned}
\right.
\end{equation*}
For case {\bf (ii)}, we have
\begin{equation*}
\cSM_*^{ii} =\big\{(w,h)\in W^{2-2/p}_p(\Omega\setminus \Gamma)\times W^{4-3/p}_p(G):  {\bf (NBCii)}\text{ holds}\big\},
\end{equation*}
where {\bf (NBCii)} is now given by
\begin{equation*}
\left.
\begin{aligned}
\cB_*w&= G_b(w,h) \mbox{ on }  G\times\{-\underline{h},\overline{h}\},
\quad &\partial_{\nu_G} w&=0 \mbox{ on } \partial G\times\{(-\underline{h}, \overline{h})\setminus\{0\}\}\\
 [\![w]\!] +\cA_\Sigma h &=G_h(w,h)  \mbox{ in }  G,
\quad &\partial_{\nu_G} h&=0 \mbox{ on } \partial G, \\
[\![w/\varrho_*]\!]&=[\![G^{ii}_w(w,h)]\!] \mbox{ on } \Sigma,
\quad &[\![ \varrho_*\cB_* w ]\!] &- F^{ii}_h(w,h)\in W^{2-6/p}_p(\Sigma). \\
\end{aligned}
\right.
\end{equation*}
In the sequel, we focus on case {\bf (i)}. Let
\begin{equation*}
\begin{aligned}
      \cSX^{i}_*  &  :=\big\{z=(w,h)\in W^{2-2/p}_p(\Omega\setminus \Gamma)\times W^{4-3/p}_p(G):  {\bf (LBCi)}\text{ holds} \big\}, \\
 X^{i}_\gamma & := \big\{z=(w,h)\in W^{2-2/p}_p(\Omega\setminus \Gamma)\times W^{4-3/p}_p(G):  \partial_{\nu_G}w=0,\;  \partial_{\nu_G}h=0  \big\}, \\
Y^i_\gamma    & : =  W^{1-3/p}_p(G\times \{-\underline{h}, \bar h\})\times W^{2-3/p}_p(G)\times W^{1-3/p}_p(\Sigma), \\
{\sf B}^ {i}z        & := (\cB_* w, [\![ w ]\!]+\cA_\Sigma h, [\![\cB_*w ]\!]), \\
{\sf G}^{i}(z)   &  := (G_b(z), G_h(z), [\![G^{i}_w(z)]\!]). 
 \end{aligned}
\end{equation*}
{with {\bf (LBCi)} given by
\begin{equation*}
\left.
\begin{aligned}
\cB_*w&= 0 \mbox{ on }  G\times\{-\underline{h},\overline{h}\},
\quad &\partial_{\nu_G} w&=0 \mbox{ on } \partial G\times\{(-\underline{h}, \overline{h})\setminus\{0\}\},\\
{[\![w]\!] +\cA_\Sigma h }&=0  \mbox{ in }  G,
\quad &\partial_{\nu_G} h&=0 \mbox{ on } \partial G, \\
[\![\cB_* w]\!]&=0 \mbox{ on } \Sigma,\quad & \cB_* w &\in  W^{2-6/p}_p(\Sigma). \\
\end{aligned}
\right.
\end{equation*}
}
Note that ${\sf G}^i\in C^1(X^i_\gamma, Y^i_\gamma)$, $({\sf G}^i(0), ({\sf G}^i)^\prime (0))=(0,0)$,
and that
\begin{equation*} 
\begin{aligned}
\cSX^i_*  &= \big\{ \tilde z=(\tilde w,\tilde h)\in X^i_\gamma : {\sf B}^i \tilde z  = 0,                   
  &&{\cB_* \tilde w \in W^{2-6/p}_p(\Sigma) \big\}, } \\
\cSM^i_* &= \big\{ z=(w,h)\in X^i_\gamma :  {\sf B}^i z = {\sf G}^i(z),   \!\!  &&{\cB_* w - F^i_h(z)\in W^{2-6/p}_p(\Sigma)\big\}.  }\\
\end{aligned}
\end{equation*}
Let $\omega>0$ be sufficiently large. By similar arguments as in Section 3.2 one shows that  the linear elliptic transmission problem
\begin{equation} \label{elliptic-linear}
\begin{aligned}
m_* \omega\; \overline w +\cA_* \overline w &=0 && \mbox{in }\; \Omega\setminus\Sigma, \\
\cB_* \overline w &=g_b  && \mbox{on }\; G\times \{-\underline{h},\overline{h}\},\\
\partial_{\nu_G} \overline w&=0 && \mbox{on }\; \partial G \times \{(-\underline{h},\overline{h})\setminus\{0\}\}, \\
[\![\overline w]\!] +\cA_\Sigma \overline h &=g_h && \mbox{in }\; G,\\
\partial_{\nu_G} \overline h &=0  && \mbox{on }\; \partial G, \\
[\![\cB_* \overline w]\!] &= g_w    &&\mbox{on }\; \Sigma,\\
\omega \overline h + {\langle \cB_* \overline w\rangle } &= f_h  && \mbox{on }\; \Sigma,
\end{aligned}
\end{equation}
admits for each 
$(g_b, g_h,g_w, f_h)\in Y^i_\gamma\times W^{1-3/p}_p(\Sigma)$
 a unique solution $(\overline w,\overline h)\in X^i_\gamma$, 
{provided $p\in (d+2,\infty)$?}
Hence, by  \eqref{zero-der} and the implicit function theorem, there exists  $r>0$ and a mapping $\phi^i \in C^1( B_{\cSX^i_*}(0,r), X^i_\gamma)$ with $(\phi^i(0),(\phi^i)^\prime(0))=(0,0)$
such that $\overline z=(\overline w,\overline h)=\phi^i(\tilde z)$ is, for each $\tilde z\in  B_{\cSX^i_*}(0,r)$, the unique solution of the nonlinear elliptic transmission problem
\begin{equation}\label{elliptic-nonlinear}
\begin{aligned}
m_* \omega\; \overline w +\cA_* \overline w &=0 && \mbox{in }\; \Omega\setminus\Sigma, \\
\cB_* \overline w &=G_b(\tilde z +\overline z)  && \mbox{on }\; G\times \{-\underline{h},\overline{h}\},\\
\partial_{\nu_G} \overline w&=0 && \mbox{on }\; \partial G \times \{(-\underline{h},\overline{h})\setminus\{0\}\}, \\
[\![\overline w]\!] +\cA_\Sigma \overline h &=G_h(\tilde z +\overline z)  && \mbox{in }\; G,\\
\partial_{\nu_G} \overline h &=0  && \mbox{on }\; \partial G, \\
[\![\cB_* \overline{w}]\!] &= [\![G^i_w (\tilde z +\overline z) ]\!]    &&\mbox{on }\; \Sigma,\\
\omega \overline h + {\langle \cB_* \overline w\rangle }&  
= {\langle  F^i_h(\tilde z +\overline z)\rangle}  && \mbox{on }\; \Sigma.
\end{aligned}
\end{equation}
{We note that the last two lines in \eqref{elliptic-nonlinear} and the fact that $F^i_h = G^i_w$
imply 
$$ B_*\overline w- F^i_h(\tilde z +\overline z) =\langle B_*\overline w- F^i_h(\tilde z +\overline z)\rangle 
=-\omega \overline h\in W^{4-3/p}_p(G).$$}
Hence,  we can conclude that
\begin{equation*}
\begin{aligned}
& {\sf B}^i [\tilde z +\phi^i (\tilde z)] = {\sf G}^i(\tilde z +\phi^i (\tilde z)),  \\
& \cB_* [\tilde w + \phi_1^i(\tilde z)] - F^i_h(\tilde z + \phi^i(\tilde z))\in W^{2-6/p}_p(\Sigma)
\end{aligned}
\end{equation*}
for each $\tilde z\in B_{\cSX^i_*}(0,r)$, where $\phi_1^i(\tilde z)$ denotes the first component of $\phi^i(\tilde z)$.
{Moreover, one shows that the mapping ${\rm id} +\phi^i$ is surjective onto a neighborhood of zero in $X^i_\gamma$. This readily implies} that the mapping 
$$\Phi^i:  B_{\cSX^i_*}(0,r)\to \cSM^i_*, \quad \Phi^i(\tilde z):=\tilde z +\phi^i(\tilde z),$$
provides a local parameterization of the state manifold $\cSM^i_*$ over $\cSX^i_*$ near $(0,0)$,
with tangent space $T_0\cSM^i_*$ isomorphic to $\cSX^i_*$.

Using the notation introduced above,  problem \eqref{nonlinear-i} can be rewritten in condensed form as
\begin{equation*} 
\begin{aligned}
\partial_t z +{\sf A}^i z & = {\sf F}^i(z), \\
                    {\sf B}^i z & = {\sf G}^i(z), \\
                               z(0)& =z_0,
\end{aligned}
\end{equation*}
where ${\sf A}^i z:=L_i(w,h)$, ${\sf F}^i(z)= (F_w(w,h)/m_*, F^i_h(w,h))$, and $ z_0=(w_0,h_0),$
with $L_i$  as  in Section 4.1.
%
We can now follow~\cite{PrSi16}, Sections 11.2.4 and 11.3, to establish the
assertions of the theorem. 
\end{proof}

\end{document}